\newcommand{\Id}{{\mathop{\operatorname{\rm Id}}}}
\newcommand{\AN}{{\rm{AN}}}
\newcommand{\ana}{{\rm{ana}}}
\newcommand{\eq}{{\rm{eq}}}
\newcommand{\adm}{{\rm{adm}}}
\newcommand{\wadm}{{\rm{wadm}}}
\DeclareMathOperator{\Ab}{{Ab}}
\DeclareMathOperator{\fin}{{fin}}
\DeclareMathOperator{\good}{{good}}
\DeclareMathOperator{\Ob}{{Ob}}
\DeclareMathOperator{\Pol}{{Pol}}
\DeclareMathOperator{\DGRings}{{DGRings}}
\DeclareMathOperator{\Groups}{{Groups}}
\DeclareMathOperator{\SGroups}{{SGroups}}
\DeclareMathOperator{\Rings}{{Rings}}
\DeclareMathOperator{\SRings}{{SRings}}
\DeclareMathOperator{\Grpds}{{Grpds}}
\DeclareMathOperator{\Cats}{{Cats}}
\DeclareMathOperator{\RGrpds}{{RGrpds}}
\DeclareMathOperator{\AbGrpds}{{Ab-Grpds}}
\newcommand{\MMor}{\underline{\on{Mor}}}
\newcommand{\cC}{{\mathcal C}}
\newcommand{\cN}{{\mathcal N}}
\newcommand{\cQ}{{\mathcal Q}}
\newcommand{\sP}{{\mathscr P}}
\newcommand{\sR}{{\mathscr R}}
\newcommand{\fC}{{\mathfrak C}}
\newcommand{\nc}{\newcommand}
\nc\wh{\widehat}
\nc\on{\operatorname}
\nc\Gr{\on{Gr}}
\nc\Fl{\on{Fl}}
\newtheorem{lem}[subsubsection]{Lemma}
\newtheorem{prop}[subsubsection]{Proposition}
\newtheorem{thm}[subsubsection]{Theorem}
\theoremstyle{remark}
\newcommand{\BZ}{{\mathbb{Z}}}
\DeclareMathOperator{\Cone}{{Cone}}
\newcommand{\limto}{{\displaystyle\lim_{\longrightarrow}}}
\newcommand{\rightlim}{\mathop{\limto}}
\newcommand{\leftlim}{\mathop{\displaystyle\lim_{\longleftarrow}}}
\newcommand{\limfromn}{\leftlim\limits_{\raise3pt\hbox{$n$}}}
\newcommand{\limton}{\rightlim\limits_{\raise3pt\hbox{$n$}}}
\newcommand{\rightlimit}[1]{\mathop{\lim\limits_{\longrightarrow}}\limits%
                    _{\raise3pt\hbox{$\scriptstyle #1$}}}
\newcommand{\leftlimit}[1]{\mathop{\lim\limits_{\longleftarrow}}\limits%
                    _{\raise3pt\hbox{$\scriptstyle #1$}}}
\newcommand{\epi}{\twoheadrightarrow}
\newcommand{\iso}{\buildrel{\sim}\over{\longrightarrow}}
\newcommand{\mono}{\hookrightarrow}
\DeclareMathOperator{\Adm}{{Adm}}
\DeclareMathOperator{\Coker}{{Coker}}
\DeclareMathOperator{\Corr}{{Corr}}
\DeclareMathOperator{\Graph}{{Graph}} 
\DeclareMathOperator{\Hom}{{Hom}}
\DeclareMathOperator{\Ker}{{Ker}} \DeclareMathOperator{\id}{{id}}
\DeclareMathOperator{\im}{{Im}} 
\DeclareMathOperator{\Mor}{{Mor}}
 \DeclareMathOperator{\op}{{op}}
\DeclareMathOperator{\Sets}{{Sets}}
\DeclareMathOperator{\SSets}{{SSets}}
\theoremstyle{definition}
\numberwithin{equation}{section}
\newcommand{\Funct}{\operatorname{Funct}}
\begin{document}
\title[Ring groupoids]{On a notion of ring groupoid}
\author{Vladimir Drinfeld}
\address{University of Chicago, Department of Mathematics, Chicago, IL 60637}
\email{drinfeld@math.uchicago.edu}

\begin{abstract}
By a ring groupoid we mean an animated ring whose $i$-th homotopy groups are zero for all $i>1$.

In this expository note we give an elementary treatment of the $(2,1)$-category of ring groupoids (i.e., without referring to general animated rings and without using $n$-categories for $n>2$).
The note is motivated by the fact that ring stacks play a central role in the Bhatt-Lurie approach to prismatic cohomology.
\end{abstract}

\maketitle

\section{Introduction}

\subsection{Subject of this note}
This note is expository. Following an idea of Lawvere \cite{La1, La2}, we introduce a notion of ring groupoid\footnote{We do not claim that it is the only reasonable notion, see \S\ref{sss:Picard groupoids}.} and a slightly more general notion of ring object in a (2,1)-category.
Then we recall an elementary description of the $(2,1)$-category of ring groupoid; the idea (which goes back to B.~Noohi~ \cite{N0}) is to use the magic word ``extension''.

The (2,1)-category of ring groupoids is a full subcategory of the $\infty$-category of \emph{animated rings}, see \S\ref{sss:animated rings}.
But we do not emphasize this point of view. On the contrary, our exposition of the notion of ring groupoid goal is \emph{elementary} (we do not use $n$-categories for $n>2$).

\subsection{Motivation}
The notion of ring stack\footnote{By a ring stack on a site $S$ we mean a ring object in the $(2,1)$-category of stacks on $S$. Equivalently, it is a prestack of ring groupoids which happens to be a stack.} plays a central role in the Bhatt-Lurie approach to prismatic cohomology, see \cite[\S 1.3-1.4]{Dr}. 

\subsection{Organization}
In \S\ref{s:the 2-category} we define the $(2,1)$-category of ring groupoids. In \S\ref{s:the 1-category} we define and describe the naive $1$-category of ring groupoids. 
In \S\ref{s:RGrpds_AN} we use the description of the  1-category to describe the $(2,1)$-category; the main result (Theorem~\ref{t:main}) goes back to \cite{N0,AN1}.
In \S\ref{s:anafunctors} we recall the notion of \emph{anafunctor} from M.~Makkai's work \cite{Mak} (this notion is closely related to the material from \S\ref{s:RGrpds_AN}).

Let us note that \S\ref{s:anafunctors} and the related \S\ref{ss:admissibility for categories} can be read independently of the rest of the article.

\subsection{Acknowledgements}
I thank A.~ Mathew and N.~Rozenblyum for useful discussions. In particular, they recommended me to define the $(2,1)$-category of ring groupoids using Lawvere's approach. Moreover, \S\ref{sss:Q & DGRings} and \S\ref{ss:Eilenberg-Zilber} are due to A.~ Mathew.

The author's work was partially supported by NSF grant DMS-2001425.

\section{Definition of the $(2,1)$-category of ring groupoids}   \label{s:the 2-category}
\subsection{Lawvere's observation}   \label{ss:Lawvere}
\subsubsection{Convention}
All rings are assumed to be commutative, associative, and unital (unless said otherwise).

\subsubsection{Notation}   \label{sss:notation}
Let $\Rings$ be the category of all rings. Let $\Pol\subset\Rings$ be the full subcategory of free rings. Let $\Pol_{\fin}\subset\Pol$ be  the full subcategory of finitely generated free rings (i.e., rings isomorphic to $\BZ [x_1,\ldots ,x_n]$ for some $n\ge 0$).

\subsubsection{Lawvere's observation}  \label{sss:Lawvere}
Consider the functors
\begin{equation}   \label{e:first functor}
\Rings\to\Funct_{\Pi}(\Rings^{\op} ,\Sets)\to \Funct_{\Pi}(\Pol^{\op} ,\Sets)
\end{equation}
where $\Funct_{\Pi}$ stands for the category of those functors that commute with products and the first arrow in \eqref{e:first functor} is the Yoneda embedding. We also have a canonical functor
\begin{equation}  \label{e:second functor}
\Funct_{\Pi}(\Pol^{\op} ,\Sets)\to\Funct_{\Pi}(\Pol^{\op}_{\fin} ,\Sets),
\end{equation}
where $\Funct_{\Pi}(\Pol^{\op}_{\fin} ,\Sets)$ is the category of those functors $\Pol^{\op}_{\fin}\to\Sets$ that commute with \emph{finite} products.

In \cite{La1, La2} Lawvere observed that the functor \eqref{e:second functor} and the composite functor \eqref{e:first functor} are equivalences\footnote{Informally, Lawvere's idea was to consider a ring $R$ as a set with \emph{infinitely many} operations: any $f\in\BZ [X_1,\ldots ,X_n]$ defines an operation $(x_1,\ldots ,x_n)\mapsto f(x_1,\ldots ,x_n)$, $x_i\in R$.}. He also observed that the inverse functor $\Funct_{\Pi}(\Pol^{\op} ,\Sets)\to\Rings$ takes a functor $F\in\Funct_{\Pi}(\Pol^{\op} ,\Sets)$ to the following ring $R_F\,$: as a set, $R_F=F(\BZ[x])$, and the addition (resp.~multiplication) map $R_F\times R_F\to R_F$ comes from the homomorphism $\BZ [x]\to\BZ [x]\otimes\BZ [x]$ that takes $x$ to $x\otimes 1+1\otimes x$ (resp.~to $x\otimes x$). Moreover, Lawvere observed that the word ``ring'' can be replaced by \emph{any} type of algebraic structure.

\subsection{Definition of the $(2,1)$-category of ring groupoids}  \label{ss:RGrpds definition}
\subsubsection{Notation}  \label{sss:2notation}
We keep the notation of \S\ref{sss:notation}. Let $\Grpds$ be the $(2,1)$-category of essentially small\footnote{A category si said to be essentially small if it is equivalent to a small one.} groupoids. It contains $\Sets$ as a full subcategory.

\subsubsection{Definition} \label{sss:RGrpds definition}
Let $\Funct_{\Pi}(\Pol^{\op} ,\Grpds)$ be the $(2,1)$-category of those functors $$\Pol^{\op}\to\Grpds$$ that commute with products. This $(2,1)$-category is called
the \emph{$(2,1)$-category of ring groupoids} and denoted by $\RGrpds$.

$\RGrpds$ identifies with $\Funct_{\Pi}(\Pol^{\op}_{\fin},\Grpds)$, where $\Funct_{\Pi}(\Pol^{\op}_{\fin}, \Grpds)$ is the $(2,1)$-category of those functors 
$\Pol^{\op}_{\fin}\to\Grpds$ that commute with finite products.

\subsubsection{The fully faithful functor $\Rings\mono\RGrpds$} 
By \S\ref{sss:Lawvere}, the fully faithful embedding $\Sets\mono\Grpds$ induces a fully faithful embedding
\begin{equation}   \label{e:rings are ring groupoids}
\Rings=\Funct_{\Pi}(\Pol^{\op},\Sets)\mono\Funct_{\Pi}(\Pol^{\op},\Grpds)=:\RGrpds.
\end{equation}

\subsubsection{The functor $\pi_0:\RGrpds\to\Rings$} \label{sss:pi_0 of a ring groupoid}
The set of isomorphism classes of objects of a groupoid $\Gamma$ is denoted by $\pi_0(\Gamma )$. The functor $\pi_0:\Grpds\to\Sets$ induces a functor
\[
\RGrpds:=\Funct_{\Pi}(\Pol^{\op},\Grpds)\to\Funct_{\Pi}(\Pol^{\op},\Sets)=\Rings,
\]
which will be denoted by $\pi_0:\RGrpds\to\Rings$. This functor is left adjoint to \eqref{e:rings are ring groupoids}. The unit of the adjunction provides a canonical 1-morphism $\sR\to \pi_0 (\sR )$ for any  $\sR\in\RGrpds$.

\begin{lem}    \label{l:mapping free rings to ring groupoids}
Let $\sR\in\RGrpds$ and $R\in\Pol$. Then any homomorphism $R\to\pi_0(\sR )$ lifts to a 1-morphism $R\to\sR$; moreover, this 1-morphism is unique up to 2-isomorphism.
\end{lem}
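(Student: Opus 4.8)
The plan is to use the identification $\RGrpds = \Funct_{\Pi}(\Pol^{\op},\Grpds)$ together with the fact that a free ring $R\in\Pol$ is, by the Lawvere picture, essentially a ``free object'': evaluating a product-preserving functor on $R$ amounts to taking a product over a generating set. First I would reduce to the case $R=\BZ[x]$, the free ring on one generator. Indeed, for a general free ring $R=\BZ[x_i]_{i\in I}$ we have $R\cong\bigotimes_{i\in I}\BZ[x_i]$, i.e. $R$ is the coproduct of copies of $\BZ[x]$ in $\Rings$; since $\sR$ commutes with products as a functor on $\Pol^{\op}$, the groupoid $\sR(R)$ is the product $\prod_{i\in I}\sR(\BZ[x])$, and likewise $\pi_0(\sR)(R)=\prod_{i\in I}\pi_0(\sR)(\BZ[x])$ because $\pi_0$ commutes with products of groupoids. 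A homomorphism $R\to\pi_0(\sR)$ is the same as an $I$-tuple of homomorphisms $\BZ[x]\to\pi_0(\sR)$, and a $1$-morphism $R\to\sR$ is the same as an $I$-tuple of $1$-morphisms $\BZ[x]\to\sR$; so both existence and uniqueness-up-to-$2$-isomorphism follow from the one-generator case by taking products. (A uniqueness statement ``up to $2$-isomorphism'' is automatically compatible with products of groupoids, since a product of inhabited connected groupoids — here, the groupoid of lifts — is again connected.)

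So it remains to treat $R=\BZ[x]$. Here the key point is the description of $1$-morphisms out of $\BZ[x]$. Unwinding the definition: a $1$-morphism $\BZ[x]\to\sR$ in $\RGrpds=\Funct_{\Pi}(\Pol^{\op},\Grpds)$ is a natural transformation of product-preserving functors $h_{\BZ[x]}\to\sR$, where $h_{\BZ[x]}=\Hom_{\Rings}(\BZ[x],-)$ is the functor represented by $\BZ[x]$ under the Yoneda-type embedding of \S\ref{sss:Lawvere}. By the $2$-categorical Yoneda lemma — which holds here because $h_{\BZ[x]}$ is the image of a representable functor and $\sR$ is a product-preserving $\Grpds$-valued functor — the groupoid of such natural transformations is equivalent to the groupoid $\sR(\BZ[x])$ itself, via evaluation on the universal element $x\in h_{\BZ[x]}(\BZ[x])=\Hom_{\Rings}(\BZ[x],\BZ[x])$. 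Applying $\pi_0$ (and using that $\pi_0:\Grpds\to\Sets$ is compatible with the Yoneda picture, since $\Rings=\Funct_{\Pi}(\Pol^{\op},\Sets)$), the set of $1$-morphisms $\BZ[x]\to\pi_0(\sR)$, i.e. $\Hom_{\Rings}(\BZ[x],\pi_0(\sR))$, is identified with $\pi_0(\sR(\BZ[x]))=\pi_0(\sR)(\BZ[x])$. Under these identifications the map ``compose with the canonical $1$-morphism $\sR\to\pi_0(\sR)$'', which sends a $1$-morphism $\BZ[x]\to\sR$ to a homomorphism $\BZ[x]\to\pi_0(\sR)$, becomes precisely the projection $\sR(\BZ[x])\to\pi_0(\sR(\BZ[x]))$.

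Now the conclusion is immediate from two elementary facts about groupoids: for any groupoid $\Gamma$, the map $\Gamma\to\pi_0(\Gamma)$ (from objects to iso-classes) is \emph{surjective} — this gives the existence of a lift — and any two objects mapping to the same class are isomorphic, with the groupoid of such isomorphisms being connected and nonempty — this gives uniqueness of the lift up to $2$-isomorphism. Concretely: given $\varphi\colon\BZ[x]\to\pi_0(\sR)$, pick any object $a\in\sR(\BZ[x])$ lying over $\varphi\in\pi_0(\sR(\BZ[x]))$; by the Yoneda identification $a$ corresponds to a $1$-morphism $\tilde\varphi\colon\BZ[x]\to\sR$ whose composite with $\sR\to\pi_0(\sR)$ is $\varphi$, and if $\tilde\varphi'$ is another such lift, corresponding to $a'\in\sR(\BZ[x])$, then $a,a'$ lie over the same element of $\pi_0$, hence are isomorphic in $\sR(\BZ[x])$, and the Yoneda equivalence turns this isomorphism into a $2$-isomorphism $\tilde\varphi\Rightarrow\tilde\varphi'$.

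The main obstacle is not conceptual but bookkeeping: one must set up the $2$-categorical Yoneda lemma in the present ``Lawvere-theory'' form — i.e. check that for a product-preserving $\sR\colon\Pol^{\op}\to\Grpds$ and a representable $h_R$, evaluation at the universal element gives an equivalence of groupoids $\Funct_{\Pi}(\Pol^{\op},\Grpds)(h_R,\sR)\xrightarrow{\sim}\sR(R)$, and that this equivalence is natural in $\sR$ and compatible (via $\pi_0$) with the ordinary Yoneda lemma underlying $\Rings=\Funct_{\Pi}(\Pol^{\op},\Sets)$. Once that compatibility square is in place, the proof is the two-line groupoid argument above; the reduction to one generator is then just an application of commutation with products.
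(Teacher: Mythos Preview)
Your proof is correct and rests on the same idea as the paper's: the (2-categorical) Yoneda lemma identifies the groupoid $\MMor(R,\sR)$ with $F(R)$ (where $\sR$ corresponds to $F\in\Funct_{\Pi}(\Pol^{\op},\Grpds)$) and $\Mor(R,\pi_0(\sR))$ with $\pi_0(F(R))$, after which the claim is the tautology that $\Gamma\to\pi_0(\Gamma)$ is surjective with connected fibers. Your reduction to the one-generator case $R=\BZ[x]$, however, is unnecessary---the Yoneda identification applies directly to any $R\in\Pol$, since the functor it represents on $\Pol^{\op}$ is representable regardless of the size of the generating set---and the paper exploits this to give the argument in two lines.
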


\begin{proof}
We have to show that the natural map $\pi_0 (\MMor (R,\sR))\to\Mor (R,\pi_0 (\sR))$ is bijective, where $\MMor (R,\sR)$ is the groupoid of 1-morphisms. 
By definition, $\sR$ is a product-preserving functor $F:\Pol^{\op}\to\Grpds$. By Yoneda's lemma, $\MMor (R,\sR)=F(R)$ and $\Mor (R,\pi_0 (\sR))=\pi_0(F(R))$.
\end{proof}

\subsubsection{The functor $\RGrpds\to\Grpds$}     \label{sss:RGrpds to Grpds}
The ``forgetful'' functor $\RGrpds\to\Grpds$ is defined as follows:
$F\in\RGrpds:=\Funct_{\Pi}(\Pol^{\op} ,\Grpds)$ goes to $F(\BZ [x])\in\Grpds$. One has commutative diagrams
\[
\xymatrix{
\Rings\ar[r] \ar@{^{(}->}[d] &\Sets \ar@{^{(}->}[d]  &\Rings\ar[r]  &\Sets  \\ 
\RGrpds\ar[r]&\Grpds  & \RGrpds\ar[r]\ar[u]^{\pi_0}&\Grpds\ar[u]_{\pi_0}
}
\]
whose horizontal arrows are the forgetful functors.

\subsubsection{Fiber products in $\RGrpds$}     \label{sss:fiber products}
In the $(2,1)$-category $\Grpds$ fiber products always exist. The same is true for $\RGrpds$; moreover, the functor $\RGrpds\to\Grpds$ commutes with fiber products.
Let us note that if $\sR\in\RGrpds$ and $R_1,R_2$ are usual rings equipped with morphisms to $\sR$ then $R_1\times_{\sR}R_2$ is a usual ring.

\subsection{Variants of the definition of the $(2,1)$-category of ring groupoids}
\subsubsection{Reformulation in terms of fibered categories}
As suggested to me by J.~Lurie, one could equivalently define a ring groupoid as a pair $(\cC,U)$, where
$\cC$ is a 1-category with finite coproducts and  $U: \cC\to\Pol_{\fin}$ is a functor which preserves finite coproducts and is a fibration in groupoids. To a ring groupoid $\sR$ in the sense of \S\ref{ss:RGrpds definition} one associates the pair $(\cC,U)$ defined as follows: $\cC$ is the category of 
pairs $(P,f)$, where $P\in\Pol_{\fin}$ and $f$ is a $1$-morphism $P\to\sR$; the functor $U: \cC\to\Pol_{\fin}$ forgets $f$. (This procedure is called \emph{Grothendieck construction}.)

Lurie's definition is ``elementary": it does not involve the notion of functor from an ordinary category to a 2-category. 

Recall that $\Pol_{\fin}$ was defined to be the category of those rings that are \emph{isomorphic} to $\BZ [x_1,\ldots ,x_n]$ for some $n\ge 0$.
One could replace $\Pol_{\fin}$ by its full subcategory formed by the rings $\BZ [x_1,\ldots ,x_n]$ themselves. Then Lurie's definition become even more elementary.

\subsubsection{Ring groupoids as Picard groupoids with additional structure}
Let $\mathcal Pic$ denote the symmetric monoidal $(2,1)$-category of strictly commutative Picard groupoids\footnote{The  $(2,1)$-category of strictly commutative Picard groupoids is defined in \S 1.4.1-1.4.6 of Expos\'e~XVIII of \cite{SGA4} (strictness means that for every object $X$, the commutativity isomorphism $X+X\iso X+X$ equals the identity). The symmetric monoidal structure on this $(2,1)$-category is defined in \S 1.4.8 of the same Expos\'e~XVIII.}. It is known that a ring groupoid in the sense of \S\ref{ss:RGrpds definition} is the same as a ``strictly'' commutative\footnote{By ``strict'' commutativity of the monoid $\sP$ we mean the following. First, it is commutative, so for any objects $X,Y$ of the Picard groupoid $\sP$ we have the commutativity isomorphism $X\cdot Y\iso Y\cdot X$. Second, if $X=Y$ this isomorphism is required to be the identity.} monoid in the symmetric monoidal $(2,1)$-category $\sP$. We will not use this fact.

Let us note that \S 2 of \cite{JP} contains a reformulation in ``concrete'' terms of the notion of (noncommutative) monoid in the symmetric monoidal $(2,1)$-category of (nonstrictly) commutative Picard groupoids.

\subsection{Some generalizations}
\subsubsection{Ring objects of an $n$-category}
If $\cC$ is an $n$-category with products we define a ring object in $\cC$ to be a product-preserving functor $\Pol^{\op}\to\cC$. We will 
use this definition only for $n\in\{1,2\}$, except a brief digression in \S\ref{sss:animated rings}.

Without assuming the existence of products in $\cC$, one can define a ring object in $\cC$ to be a product-preserving functor $F:\Pol^{\op}\to\Funct (\cC^{\op},\Sets)$ such that $F(\BZ [x])$ is a representable functor $\cC^{\op}\to\Sets$.

\subsubsection{Animated rings}  \label{sss:animated rings}
Let $\cC$ be the $\infty$-category of $\infty$-groupoids, which are also known as  \emph{animated sets}, see  \cite{CS}. One can also describe $\cC$ as the $\infty$-category of \emph{spaces} or \emph{simplicial sets}.

Ring objects in $\cC$ are called \emph{animated rings}, see  \cite{CS}. They form an $\infty$-category, which can also be described as the $\infty$-category of \emph{simplicial rings}.

The 2-category of groupoids is a full subcategory of the $\infty$-category $\cC$. So the 2-category of ring groupoids is a full subcategory of the 
$\infty$-category of animated rings (namely, the full subcategory of 1-truncated animated rings).

Let us note that the exposition of  ``animation'' in \cite{CS} relies on \cite[\S 5.5.8]{Lu1}.

\subsubsection{Replacing rings by other types of algebraic structure}   \label{sss:Ab instead of Rings}
One can replace rings by any type of algebraic structure (groups, Lie algebras, etc.). 
Let $\AbGrpds$ denote the analog of $\RGrpds$ obtained by replacing rings with abelian groups and replacing $\Pol$ with the category of free abelian groups.

\subsubsection{Example: Picard groupoids}   \label{sss:Picard groupoids}
It is known that the above $(2,1)$-category $\AbGrpds$ is canonically equivalent to the $(2,1)$-category of \emph{strictly} commutative Picard groupoids in the sense of \cite[Expos\'e~XVIII, \S 1.4]{SGA4}. So the
$(2,1)$-category $\AbGrpds$ is ``reasonable''. On the other hand, the bigger $(2,1)$-category of \emph{all} commutative Picard groupoids in the sense of \cite{SGA4} is no less reasonable.

\section{The naive 1-category of ring groupoids}   \label{s:the 1-category}
\subsubsection{Notation}
Let $\SSets$ (resp.~$\SRings$) be the 1-category of simplicial sets (resp.~simplicial rings). 

\subsection{Three incarnations of the 1-category of groupoids}  \label{ss:Grpds'}
\subsubsection{} \label{sss:Grpds'_1}
Let $\Grpds'_1$ be the most naive 1-category of small groupoids (its morphisms are functors on the nose). 
It contains $\Sets$ as a full subcategory. In \S\ref{sss:Grpds'_1}-\ref{sss:Grpds'_2} below we define categories $\Grpds'_2$ and $\Grpds'_3$ canonically equivalent to $\Grpds'_1$.

\subsubsection{} \label{sss:Grpds'_2}
Associating to a groupoid $\Gamma$ its nerve $N\Gamma$, one gets a fully faithful embedding $$\Grpds'_1\mono\SSets.$$
Let $\Grpds'_2\subset\SSets$ be its essential image.
A simplicial set $X$ belongs to 
$\Grpds'_2$ if and only if it has the following property: for any $n\ge 2$ and any horn $\Lambda$ in the simplex $\Delta^n$, every map $\Lambda\to X$ has one and only one extension to a map $\Delta^n\to X$. 
Passing from $\Grpds'_1$ to $\Grpds'_2$ 
is a convenient ``book-keeping device''.

\subsubsection{} \label{sss:Grpds'_3}
Here is a way to relate the 1-category $\Grpds'_1$ to the $(2,1)$-category $\Grpds$. Let $[1]$ denote the ordered set $\{ 0,1\}$ viewed as a category. Let $\Funct ([1], \Grpds)$ be the
$(2,1)$-category of functors $[1]\to\Grpds$.
Now define $\Grpds'_3\subset\Funct ([1], \Grpds)$ to be the full subcategory of functors
$\Phi :[1]\to \Grpds$ such that $\Phi (0)\in\Sets$ and the functor $\Phi (0)\to\Phi (1)$ is essentially surjective. Given $\Gamma\in\Grpds'_1$, define 
$\Phi_\Gamma\in\Grpds'_3$ as follows: $\Phi_\Gamma (1)=\Gamma$, $\Phi_\Gamma (0)$ is the set $\Ob\Gamma$ (viewed as a discrete groupoid), and the map $\Phi_\Gamma (0)\to\Phi_\Gamma (1)$ is the obvious one. Thus we get a functor
\[
\Grpds'_1\to\Grpds'_3 , \quad \Gamma\mapsto\Phi_\Gamma .
\]
This functor is an equivalence (so $\Grpds'_3$ is a 1-category rather than merely a $(2,1)$-category). The inverse functor takes $\Phi\in\Grpds'_3$ to the following groupoid $\Gamma$: the set of objects of $\Gamma$ is $\Phi (0)$, and for every $x,y\in \Phi (0)$ one has $\Mor_\Gamma (x,y):=\Mor_{\Phi (1)} (x_1,y_1)$, where $x_1,y_1\in\Phi (1)$ are the images of $x$ and $y$ (composition of $\Gamma$-morphisms comes from composition of $\Phi (1)$-morphsims).

\subsection{Three incarnations of the 1-category of ring groupoids}  \label{ss:RGrpds'}
For $n=1,2,3$ define $\RGrpds'_n$ to be the category of ring objects in the category $\Grpds'_n\,$.
Similarly to \S\ref{sss:RGrpds to Grpds}, the ``forgetful'' functor $\RGrpds'_n\to\Grpds'_n$ is defined as follows:
$F\in\RGrpds'_n:=\Funct_{\Pi}(\Pol^{\op} ,\Grpds'_n)$ goes to $F(\BZ [x])\in\Grpds'_n$.

\subsubsection{On $\RGrpds'_2$}
By \S\ref{sss:Lawvere}, $\RGrpds'_2$ identifies with the category of simplicial rings such that the underlying simplicial set is a nerve of a groupoid.

\subsubsection{A convenient way to think of $\RGrpds'_2$}
For any category $\cC$, there is a notion of groupoid internal to $\cC$. 
The category $\RGrpds'_2$ identifies with the category of groupoids internal to $\Rings$. We will mostly think of $\RGrpds'_2$ in this way.

The forgetful functor $\Rings\to\Sets$ commutes with projective limits, so a groupoid internal to $\Rings$ is just a usual groupoid $\Gamma$ plus a ring structure on $\Ob\Gamma$ and on $\Mor\Gamma$ (where $\Mor\Gamma$ is  the set of \emph{all\,} morphisms in $\Gamma$) such that the following maps are ring homomorphisms:

(i) the map $\Mor\Gamma\to\Ob\Gamma$ that takes a morphism to its source (resp.~target);

(ii) the map $\Ob\Gamma\to\Mor\Gamma$ that a takes $a\in\Ob\Gamma$ to $\id_a\,$;

(iii) the map $(f,g)\mapsto g\circ f$, which is defined on the ring of all composable pairs of $\Gamma$-morphisms.

\subsubsection{On $\RGrpds'_3$}  \label{sss:RGrpds'_3}
The category $\RGrpds'_3$ identifies with the category formed by functors $\Phi :[1]\to \RGrpds$ such that $\Phi (0)\in\Rings$ and the functor $\Phi (0)\to\Phi (1)$ is essentially surjective
(by this we mean that it is essentially surjective as a functor between ``plain'' groupoids).

\begin{prop}   \label{p:essential surjectivity}
The functor $\RGrpds'_1\to\RGrpds$ is essentially surjective.
\end{prop}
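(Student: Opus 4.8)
The plan is to show that every ring groupoid $\sR$, i.e. every product-preserving functor $F\colon\Pol^{\op}\to\Grpds$, is equivalent to one of the form $\Phi_\Gamma$ for a ring object $\Gamma$ in $\Grpds'_1$. Concretely, using the identification of $\RGrpds'_2$ with groupoids internal to $\Rings$ (and the equivalence $\RGrpds'_1\simeq\RGrpds'_3$), it suffices to produce, for a given $\sR\in\RGrpds$, a groupoid internal to $\Rings$ whose associated object of $\RGrpds$ is equivalent to $\sR$; equivalently, a ring $A$ together with an essentially surjective $1$-morphism $A\to\sR$ in $\RGrpds$. Indeed, given such $A\to\sR$, form $\Phi\colon[1]\to\RGrpds$ with $\Phi(0)=A$, $\Phi(1)=\sR$; by \S\ref{sss:RGrpds'_3} this is an object of $\RGrpds'_3$, hence comes from an object of $\RGrpds'_1$, and by construction its image in $\RGrpds$ is $\sR$.

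So the crux is: \emph{every ring groupoid $\sR$ admits an essentially surjective $1$-morphism from an honest ring}. The natural candidate is a free resolution. Choose a surjection of rings $A\twoheadrightarrow\pi_0(\sR)$ with $A\in\Pol$ (e.g.\ the free ring on the underlying set of $\pi_0(\sR)$, or on any generating set). By \lemref{l:mapping free rings to ring groupoids}, since $A$ is a free ring, the homomorphism $A\to\pi_0(\sR)$ lifts to a $1$-morphism $A\to\sR$ in $\RGrpds$, unique up to $2$-isomorphism. I claim this $1$-morphism is essentially surjective as a map of underlying groupoids: applying the forgetful functor $\RGrpds\to\Grpds$ of \S\ref{sss:RGrpds to Grpds} and using the commutative square relating it to $\pi_0$ and the forgetful functor $\Rings\to\Sets$, the composite $A\to\sR\to\pi_0(\sR)$ on underlying objects is the chosen surjection $A\twoheadrightarrow\pi_0(\sR)$; since $\pi_0$ of the underlying groupoid of $\sR$ is precisely $\pi_0(\sR)$ by \S\ref{sss:pi_0 of a ring groupoid}, surjectivity of this composite on $\pi_0$ forces $A\to\sR$ to be essentially surjective. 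Then $\Phi(0)=A\in\Rings$ and $\Phi(0)\to\Phi(1)=\sR$ is essentially surjective, so $\Phi\in\RGrpds'_3\simeq\RGrpds'_1$ maps to $\sR$, proving essential surjectivity of $\RGrpds'_1\to\RGrpds$.

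One should double-check that the functor $\RGrpds'_1\to\RGrpds$ under consideration really is (up to natural equivalence) the composite $\RGrpds'_1\xrightarrow{\sim}\RGrpds'_3\to\RGrpds$ induced on ring objects by $\Grpds'_1\xrightarrow{\sim}\Grpds'_3\hookrightarrow\Funct([1],\Grpds)\to\Grpds$ (evaluation at $1$); this is a bookkeeping matter, following from the fact that passing to ring objects, i.e.\ applying $\Funct_\Pi(\Pol^{\op},-)$, is functorial and sends the equivalence $\Grpds'_1\simeq\Grpds'_3$ and the projection to $\Grpds$ to the corresponding functors on ring objects. I expect the only genuine point requiring care — the ``main obstacle,'' though it is mild — is the compatibility bookkeeping: verifying that the underlying-groupoid functor $\RGrpds\to\Grpds$ intertwines with $\pi_0$ and with the forgetful functor on rings exactly as recorded in the diagrams of \S\ref{sss:RGrpds to Grpds}, so that ``essentially surjective in $\RGrpds$'' can be detected on $\pi_0(\pi_0(\sR))=\pi_0(\sR)$. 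Everything else is a direct application of \lemref{l:mapping free rings to ring groupoids} plus the description of $\RGrpds'_3$ in \S\ref{sss:RGrpds'_3}.
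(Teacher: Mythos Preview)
Your proof is correct and follows essentially the same approach as the paper: reduce to $\RGrpds'_3$, choose a free ring $A$ surjecting onto $\pi_0(\sR)$, lift via \lemref{l:mapping free rings to ring groupoids}, and observe that the resulting $1$-morphism $A\to\sR$ is essentially surjective. Your write-up is more detailed in justifying the essential surjectivity of the lift and in tracking the compatibility of $\RGrpds'_1\simeq\RGrpds'_3\to\RGrpds$, but the paper's proof is the same argument stated more tersely.
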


\begin{proof}
It suffices to show that the functor $\RGrpds'_3\to\RGrpds$ is essentially surjective. By \S\ref{sss:RGrpds'_3}, the problem is to show that for any ring groupoid $\sR$ there exists an essentially surjective 1-morphism $R\to\sR$, where $R$ is a ring.

Choose a polynomial ring $R$ and an epimorphism $R\epi\pi_0(\sR )$. By Lemma~\ref{l:mapping free rings to ring groupoids}, it lifts to a 1-morphism $R\to\sR$. The latter is essentially surjective. 
\end{proof}

\subsection{Quasi-ideals, DG rings, and 1-truncated simplicial rings}  \label{ss:quasi-ideals}
In this subsection we define a category $\cQ$ and some categories which are obviously equivalent to it.
In \S\ref{ss:Q=RGrpds'} we will show that these categories are equivalent to the categories
$\RGrpds'_1$, $\RGrpds'_2$, $\RGrpds'_3$ from \S\ref{ss:RGrpds'}; in other words, they can be considered as incarnations of the 1-category of ring groupoids.
These incarnations are more manageable than those from  \S\ref{ss:RGrpds'}.

\subsubsection{Quasi-ideals and the category $\cQ$} \label{sss:quasi-ideals} 
By a \emph{quasi-ideal} in a ring $C$ we mean a pair $(I,d)$, where $I$ is a $C$-module and $d:I\to C$ is a $C$-linear map such that 
\begin{equation}  \label{e:quasi-ideal}
d(x)\cdot y=d(y)\cdot x
\end{equation}
for all $x,y\in I$. 

Let $\cQ$ be the category of all triples $(C,I,d)$, where $C$ is a ring and $(I,d)$ is a quasi-ideal in~$C$.

\subsubsection{Remarks}  \label{sss:quasi-ideal rems}
(i) A quasi-ideal $(I,d:I\to C)$ with $\Ker d=0$ is essentially the same as an ideal in $C$.

(ii) If  $(I,d)$ is a quasi-ideal in $C$ then $I$ is a (non-unital) ring with respect to the multiplication operation 
$(x,y)\mapsto d(x)\cdot y$.

\subsubsection{Quasi-ideals and DG rings}  \label{sss:Q & DGRings}
If $(I,d)$ is a quasi-ideal in a ring $C$ then one can define a DG ring $R$ as follows: $R^0=C$, $R^{-1}=I$, $R^i=0$ for $i\ne 0,1$, the differential in $R$ is given by $d:I\to C$, and the multiplication maps 
$$R^0\times R^0\to R^0, \quad R^0\times R^{-1}\to R^{-1}$$ 
come from the ring structure on $C$ and the $C$-module structure on $I$; note that the Leibnitz rule in $R$ is equivalent to \eqref{e:quasi-ideal}. Thus one gets an equivalence between the category $\cQ$ from \S\ref{sss:quasi-ideals} and the category of DG rings $R$ such that $R^i=0$ for $i\ne 0,-1$.

\subsubsection{1-truncated simplicial sets and rings}
Recall that a simplicial set is a functor $$\Delta^{\op}\to\Sets,$$ where $\Delta$ is the category of finite linearly ordered sets.
By a \emph{1-truncated simplicial set} we mean a functor $\Delta_{\le 1}^{\op}\to\Sets$, where $\Delta_{\le 1}\subset\Delta$ is the full subcategory formed by linearly ordered sets of order $\le 2$.  The category of 1-truncated simplicial sets will be denoted by $\SSets_{\le 1}$. We have the restriction (a.k.a. truncation) functor 
$\SSets\to\SSets_{\le 1}$.

Similarly, we have the category of \emph{1-truncated simplicial rings,} denoted by $\SRings_{\le 1}$, and the truncation functor $\SRings\to\SRings_{\le 1}$.
Explicitly, an object of $\SRings_{\le 1}$ is a collection
\[
(A_0,\, A_1,\,  \partial_0:A_1\to A_0,\, \partial_1:A_1\to A_0,\,  s:A_0\to A_1),
\]
where $A_0,A_1$ are rings and $\partial_0,\partial_1,s$ are ring homomorphisms such that 
\[
\partial_0\circ s=\partial_1\circ s=\id_{A_0}\, .
\]

\subsubsection{The subcategory $\SRings_{\le 1}^{\good}$}  \label{sss:goodness}
Let $\SRings_{\le 1}^{\good}\subset\SRings_{\le 1}$ be the full subcategory of collections 
$(A_0,\, A_1,\partial_0,\partial_1, s)\in\SRings_{\le 1}$ such that
\begin{equation}  \label{e:KerKer=0}
(\Ker\partial_0 ) \cdot  (\Ker\partial_1 )=0.
\end{equation}

\begin{lem}   \label{l:Q and SRings_le 1}
The categories $\cQ$ and $\SRings_{\le 1}^{\good}$ are equivalent.
\end{lem}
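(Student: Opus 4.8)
The plan is to write down explicit functors in both directions between $\cQ$ and $\SRings_{\le 1}^{\good}$ and check they are mutually inverse. Going from a quasi-ideal to a $1$-truncated simplicial ring is the ``Dold--Kan-type'' direction: given $(C,I,d)\in\cQ$, I would set $A_0=C$ and $A_1=C\oplus I$, with $s:A_0\to A_1$ the inclusion $c\mapsto(c,0)$, with $\partial_1:A_1\to A_0$ the projection $(c,x)\mapsto c$, and with $\partial_0:A_1\to A_0$ given by $(c,x)\mapsto c+d(x)$. One must specify the ring structure on $A_1=C\oplus I$: it is the square-zero-type extension twisted by $d$, namely $(c,x)\cdot(c',x')=(cc',\, cx'+c'x+\text{(correction)})$; here the natural choice making $\partial_0$ a ring map is $(c,x)\cdot(c',x') = (cc',\, cx' + c'x + d(x)x')$, and one checks commutativity and associativity using exactly the quasi-ideal identity \eqref{e:quasi-ideal} (this is the same computation that in \S\ref{sss:Q & DGRings} shows the Leibnitz rule holds). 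Then $\partial_0$ is a ring homomorphism precisely because $\partial_0((c,x)(c',x'))=cc'+d(cx'+c'x+d(x)x')=(c+d(x))(c'+d(x'))$, again using \eqref{e:quasi-ideal}. Finally $\partial_0 s=\partial_1 s=\id_{A_0}$ is immediate, and the goodness condition \eqref{e:KerKer=0} holds because $\Ker\partial_1=0\oplus I$ and $\Ker\partial_0=\{(-d(x),x):x\in I\}$, whose product lands in $0\oplus(d(I)\cdot I)$ — wait, that is not obviously zero, so more care is needed: the product of $(-d(x),x)$ and $(-d(y),y)$ is $(d(x)d(y),\, -d(x)y - d(y)x + d(x)y) = (d(x)d(y), -d(y)x)$, and since $d(x)d(y)=d(d(x)y)$ while we also need this to vanish — this is the point to be careful about, see below.

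In the reverse direction, given $(A_0,A_1,\partial_0,\partial_1,s)\in\SRings_{\le 1}^{\good}$, I would set $C=A_0$, let $I=\Ker\partial_1\subset A_1$ (a non-unital ring, but I make it a $C$-module via $s$: for $c\in C$ and $x\in I$, $c\cdot x:=s(c)x$), and let $d:I\to C$ be the restriction of $\partial_0$. That $d$ is $C$-linear follows from $\partial_0$ and $s$ being ring homomorphisms with $\partial_0 s=\id$. The quasi-ideal identity \eqref{e:quasi-ideal}, i.e. $d(x)y=d(y)x$ in $I$ for $x,y\in\Ker\partial_1$, is where \eqref{e:KerKer=0} enters: one writes $x = s(\partial_0 x) + (x - s\partial_0 x)$ and notes $x-s\partial_0 x\in\Ker\partial_0$ (since $\partial_0 s\partial_0 x = \partial_0 x$) while $x\in\Ker\partial_1$, and similarly for $y$; expanding $xy$ two ways and using that the $\Ker\partial_0$-parts multiply to zero against each other, one gets $xy = s(d(x))\,y = s(d(y))\,x$ up to terms killed by \eqref{e:KerKer=0}, which is exactly \eqref{e:quasi-ideal} after identifying $s(d(x))y = d(x)\cdot y$.

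Then I would check the two composites are naturally isomorphic to the identity. Starting from $(C,I,d)$, building $(A_0,A_1,\dots)$ and extracting $(\Ker\partial_1)$ recovers $I=0\oplus I$ with the right module structure and $d$; conversely, starting from a good $1$-truncated simplicial ring, the map $A_1\to A_0\oplus\Ker\partial_1$, $a\mapsto(\partial_1 a,\, a - s\partial_1 a)$ is a ring isomorphism onto the reconstructed $A_1'=C\oplus I$ (bijectivity is elementary; compatibility of the multiplications is again \eqref{e:KerKer=0}), and it intertwines all the structure maps. Naturality in both directions is routine. I expect the main obstacle to be the bookkeeping around condition \eqref{e:KerKer=0}: verifying that it is exactly what is needed — no more, no less — to make $d(x)\cdot y$ symmetric, and dually that the twisted multiplication on $C\oplus I$ satisfies \eqref{e:KerKer=0}; one likely has to arrange the correction term in the multiplication on $A_1$ more symmetrically (e.g. so that $\Ker\partial_0\cdot\Ker\partial_0$ genuinely vanishes) rather than with the naive asymmetric formula above. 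Everything else is a direct unwinding of definitions, and in fact the cleanest route may be to factor the equivalence through the category of DG rings concentrated in degrees $0,-1$ from \S\ref{sss:Q & DGRings} together with a normalized-chains/Dold--Kan identification of that with $\SRings_{\le 1}^{\good}$.
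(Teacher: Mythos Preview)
Your approach is essentially the same as the paper's: define mutually inverse functors by taking $A_1=C\oplus I$ with the ``twisted square-zero'' multiplication in one direction, and extracting $I$ as a kernel with $d$ the other face map in the reverse direction. The paper uses the convention $I=\Ker\partial_0$, $d=\partial_1|_I$ (and correspondingly $\partial_0(c,x)=c$, $\partial_1(c,x)=c+dx$), while you swap $\partial_0$ and $\partial_1$; the paper explicitly remarks after the proof that the two choices give isomorphic equivalences, so this is harmless.

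Your one stumble is in verifying goodness for the constructed $A_1$: you compute the product of two elements of $\Ker\partial_0$ and find it nonzero, then suggest the multiplication needs to be symmetrized. But condition~\eqref{e:KerKer=0} asks for $(\Ker\partial_0)\cdot(\Ker\partial_1)=0$, not $(\Ker\partial_0)^2=0$. With your formulas, $\Ker\partial_1=0\oplus I$ and $\Ker\partial_0=\{(-d(x),x)\}$, and
\[
(-d(x),x)\cdot(0,y)=\bigl(0,\,-d(x)y+0\cdot x+d(x)y\bigr)=(0,0),
\]
while the other order gives $(0,-d(x)y+d(y)x)=(0,0)$ by the quasi-ideal identity. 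So your asymmetric multiplication already works and no correction is needed; the rest of your outline (including the verification of \eqref{e:quasi-ideal} from \eqref{e:KerKer=0} via $x-s\partial_0 x\in\Ker\partial_0$) matches the paper's argument.
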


\begin{proof}
Given $(A_0,\, A_1,\partial_0,\partial_1, s)\in\SRings_{\le 1}^{\good}$, set $C:=A_0$, $I:=\Ker\partial_0$, define $d:I\to C$ by $d:=\partial_1|_I$, and define the $C$-module structure on $I$ using the homomorphism $C=A_0\overset{s}\longrightarrow A_1$. Let us prove \eqref{e:quasi-ideal}. If $x,y\in I$ then $x(y-s(\partial_1(y))=0$ by \eqref{e:KerKer=0}, which means that the r.h.s. of \eqref{e:quasi-ideal} equals $xy$. By symmetry, this is also true for the l.h.s. of \eqref{e:quasi-ideal}.

We have constructed a functor $\SRings_{\le 1}^{\good}\to\cQ$. It is easy to see that it is an equivalence and the inverse functor $\cQ\to\SRings_{\le 1}^{\good}$ is as follows: $A_0:=C$, $A_1:=C\oplus I$, $s:A_0\to A_1$ is the inclusion $C\mono C\oplus I$, the maps $\partial_0,\partial_1:C\oplus I\to C$ are given by
\[
\partial_0 (c,x):=c, \;\partial_1 (c,x):=c+dx, \quad c\in C, x\in I,
\]
and the ring structure on $A_1$ comes from the $C$-module structure on $I$ and the operation on $I$ defined in \S\ref{sss:quasi-ideal rems}(ii); one checks that
$\partial_1$ is a ring homomorphism and \eqref{e:KerKer=0} holds.
\end{proof}

\subsubsection{Remark}
We will always use the equivalence $\cQ\iso\SRings_{\le 1}^{\good}$ constructed in the above proof. One can check that it is isomorphic to the equivalence 
$\cQ\iso\SRings_{\le 1}^{\good}$ that one gets by reversing the roles of $\partial_0$ and $\partial_1$.

\subsection{The equivalence $\cQ\iso\RGrpds'_2\,$}  \label{ss:Q=RGrpds'}
We will first construct a canonical equivalence $\cQ\iso\RGrpds'_2$ using the category $\SRings_{\le 1}^{\good}$ as an intermediate step. 
Then we describe this equivalence directly, see \S\ref{sss:description of Cone (d)}. The reader may prefer to look at \S\ref{sss:description of Cone (d)} before reading 
\S\ref{sss:1-truncated nerve}-\ref{sss:Q=RGrpds'_2}.

\subsubsection{The 1-truncated nerve of a category}   \label{sss:1-truncated nerve}
The nerve of a category $\cC$ is denoted by $N\cC$. The image of $N\cC$ under the functor $\SSets\to\SSets_{\le1}$ will be denoted by $N_{\le 1}\cC$ and called
the \emph{1-truncated nerve} of $\cC$.

In other words, $N_{\le 1}\cC$ remembers the set of objects of $\cC$, the set of morphisms in $\cC$, the source and target of each morphism, and the morphisms $\id_c$ for all $c\in\Ob\cC$; however, it forgets the composition of morphisms. Thus $N_{\le 1}\cC$ is not really interesting.

Similarly, if $\cC$ is a category internal to $\Rings$ one has the nerve $N\cC\in\SRings$ and the 1-truncated nerve $N_{\le 1}\cC\in\SRings_{\le1}$. The next proposition shows that in this setting $N_{\le 1}\cC$ is quite interesting. 

\begin{prop}   \label{p:categories internal to Rings}
(i) The above functor
\[
N_{\le 1}:\{\mbox{Categories internal to $\Rings$\}}\to\SRings_{\le1}
\]
is fully faithful.

(ii) Its essential image equals $\SRings_{\le 1}^{\good}$.

(iii) Every category internal to $\Rings$ is a groupoid.
\end{prop}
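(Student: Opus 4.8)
The plan is to treat the three assertions in sequence, with parts (i) and (ii) handled together via the explicit combinatorics of the nerve, and part (iii) deduced afterward from the structure that (i)-(ii) reveal. Throughout, a category $\cC$ internal to $\Rings$ is given by rings $A_0=\Ob\cC$ (objects), $A_1=\Mor\cC$ (morphisms), source/target/identity homomorphisms, and a composition homomorphism defined on $A_2:=A_1\times_{A_0}A_1$ (composable pairs, with the fiber product taken in $\Rings$, equivalently in $\Sets$). The $1$-truncated nerve remembers exactly $(A_0,A_1,\partial_0,\partial_1,s)$, forgetting the composition. So (i) says the composition is canonically recoverable from this data, and (ii) pins down which $(A_0,A_1,\partial_0,\partial_1,s)$ arise.

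For (i), the key observation — following \S\ref{sss:Q & DGRings} and the proof of \lemref{l:Q and SRings_le 1} — is that if $g,h$ are morphisms with $\partial_0(h)=\partial_1(g)$, then I claim $g\circ h$ is forced. Write $e:=s(\partial_1(g))$ for the identity at the common object; then $g-e$ and $h-e$ lie in $\Ker\partial_1$ and $\Ker\partial_0$ respectively (after possibly swapping the roles, matching whichever convention makes source/target line up), so their product vanishes by the goodness condition \eqref{e:KerKer=0}, which must hold for a nerve — I'd verify this by a short direct computation. Expanding $(g-e)(h-e)=0$ and using that composition is a ring homomorphism with $g\circ e = g$, $e\circ h = h$ (unit axioms) and $e\circ e=e$, one recovers $g\circ h = g + h - e$ as an expression purely in the ring structures of $A_0,A_1$ and the maps $\partial_0,\partial_1,s$. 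This formula is visibly functorial in the internal category, so $N_{\le 1}$ is faithful; fullness follows because any morphism of the truncated data automatically respects this formula for composition, hence underlies a morphism of internal categories. I should be careful about which kernel is which and confirm the convention-independence flagged in the Remark after \lemref{l:Q and SRings_le 1}.

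For (ii): one inclusion is the verification just mentioned — the nerve of an internal category satisfies \eqref{e:KerKer=0}. Conversely, given $(A_0,A_1,\partial_0,\partial_1,s)\in\SRings_{\le1}^{\good}$, I define composition by the formula $g\circ h := g+h-s(\partial_1(g))$ on composable pairs and check it lands in $A_1$ with the right source and target, is associative and unital, and is a ring homomorphism on $A_2$ — the goodness condition is exactly what makes associativity and the homomorphism property work, as in the computation in \lemref{l:Q and SRings_le 1}. Alternatively, and more cleanly, I would invoke \lemref{l:Q and SRings_le 1} to transport to $\cQ$, then use the $\cQ\iso(\text{DG rings in degrees }0,-1)$ description from \S\ref{sss:Q & DGRings}, and build the internal category directly from the quasi-ideal $(I,d)$: objects $C$, morphisms $C\oplus I$, with $g=(c,x)$ going from $c$ to $c+dx$, and composition of $(c,x)$ then $(c+dx,y)$ being $(c,x+y)$. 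This makes the nerve computation transparent and is the route I expect to write up.

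Part (iii) is then immediate: in the quasi-ideal model, the inverse of the morphism $(c,x)\colon c\to c+dx$ is $(c+dx,-x)\colon c+dx\to c$, since $d(-x)=-dx$ and the composite in either order is an identity by the addition-of-$I$-coordinates rule. So every internal category is a groupoid. \textbf{The main obstacle} I anticipate is purely bookkeeping: getting the source/target conventions and the sign/placement of $s(\partial_1(-))$ versus $s(\partial_0(-))$ consistent so that the composition formula genuinely has the asserted source and target and so that the two conventions (swapping $\partial_0\leftrightarrow\partial_1$) give isomorphic answers. Once the quasi-ideal dictionary of \S\ref{ss:quasi-ideals} is in hand this is routine, but it is the place where a careless computation would go wrong.
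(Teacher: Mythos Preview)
Your overall strategy matches the paper's: establish the formula $g\circ f=f+g-s(a)$ (where $a=\partial_1(f)=\partial_0(g)$), then check that this formula defines a ring homomorphism on the ring of composable pairs precisely when \eqref{e:KerKer=0} holds. Parts (ii) and (iii) in your plan are fine and essentially what the paper does.

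There is, however, a genuine confusion in your derivation for (i). The identity $(g-e)(h-e)=0$ lives in the ring $A_1$ and says only that $gh=ge+eh-e^2$ as \emph{products} in $A_1$; it says nothing about the composition $g\circ h$, and I do not see how to extract the composition formula from it. In fact the formula $g\circ f=f+g-s(a)$ requires only that composition be \emph{additive}: in the ring $A_2$ of composable pairs one has the additive decomposition
\[
(f,g)=(f,s(a))+(s(a),g)-(s(a),s(a)),
\]
and applying the composition map (a ring homomorphism, hence additive) together with the unit axioms gives the formula immediately. This is exactly the content of \lemref{l:category gives 1-truncated Sset}, which the paper proves at the level of abelian groups, with no ring structure and no goodness needed. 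The paper then isolates the role of the multiplicative structure in \lemref{l:goodness appears}: the formula is a ring homomorphism on $A_2$ if and only if \eqref{e:KerKer=0} holds. So goodness is a \emph{consequence} of the composition formula for a genuine internal category, not an input to deriving it; your argument has this dependency reversed. Once you remove the detour through $(g-e)(h-e)=0$ and rely on additivity alone, your proof of (i) goes through and the rest is as you say.
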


The proposition will be deduced from Lemmas~\ref{l:category gives 1-truncated Sset}-\ref{l:goodness appears}. The first two of them 
describe  categories internal to $\Ab$, where $\Ab$ is the category of abelian groups.

\begin{lem}   \label{l:category gives 1-truncated Sset}
Let $\cC$ be a category internal to $\Ab$ and let $(A_0,\, A_1,\partial_0,\partial_1, s)$ be its 1-truncated nerve.

(i) Suppose that $f,g\in A_1$ and $\partial_1 (f)=\partial_0 (g)$ (in other words, $f$ and $g$ form a composable pair of morphisms). Then
\begin{equation}  \label{e:circ in terms of +}
g\circ f=f+g-s(a), \quad \mbox{where }a=\partial_1 (f)=\partial_0 (g).
\end{equation}

(ii) $\cC$ is a groupoid. The inverse of $f\in A_1$ equals $s(\partial_0(f))+s(\partial_1 (f))-f$.
\end{lem}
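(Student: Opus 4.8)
The plan is to prove Lemma~\ref{l:category gives 1-truncated Sset} by pure diagram-chasing inside $\Ab$, exploiting the fact that all the structure maps $\partial_0,\partial_1,s$ of the internal category $\cC$ are group homomorphisms, and that composition itself is a group homomorphism on the abelian group of composable pairs.

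First I would set up notation: write $A_2$ for the abelian group of composable pairs $(f,g)$ with $\partial_1(f)=\partial_0(g)$, which is the fiber product $A_1\times_{A_0}A_1$ taken along $\partial_1$ and $\partial_0$; this is an abelian group because fiber products exist in $\Ab$ and the forgetful functor to sets preserves them. Composition is a homomorphism $\mu:A_2\to A_1$, $\mu(f,g)=g\circ f$. The key observation for part~(i) is that any composable pair $(f,g)$ can be written inside $A_2$ as a sum of three ``degenerate'' composable pairs whose composites are computed by the unit axioms. Concretely, with $a=\partial_1(f)=\partial_0(g)$, one writes
\[
(f,g) = (f,\, s(a)) + (s(a),\, g) - (s(a),\, s(a)),
\]
which is a valid identity in $A_2$ since each summand is a composable pair (the relevant sources and targets all equal $a$) and the identity holds componentwise in $A_1\times A_1$. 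Applying the homomorphism $\mu$ and using $s(a)\circ f = f$, $g\circ s(a)=g$, and $s(a)\circ s(a)=s(a)$ (these are exactly the left/right unit axioms for the internal category, with $s(a)=\id_a$), we get $g\circ f = f + g - s(a)$, which is \eqref{e:circ in terms of +}.

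For part~(ii), once \eqref{e:circ in terms of +} is available, I would simply exhibit a two-sided inverse. Given $f\in A_1$ with source $a=\partial_0(f)$ and target $b=\partial_1(f)$, set $g:=s(a)+s(b)-f$; note $g$ is a morphism from $b$ to $a$, since $\partial_0$ and $\partial_1$ are homomorphisms, so $\partial_0(g)=a+b-b=a$ is wrong in general---rather $\partial_0(g)=\partial_0(s(a))+\partial_0(s(b))-\partial_0(f)=a+b-a=b$ and $\partial_1(g)=a+b-b=a$, so indeed $g$ goes from $b$ to $a$ as required for composability with $f$. Then $(f,g)$ and $(g,f)$ are both composable, and \eqref{e:circ in terms of +} gives $g\circ f = f+g-s(a) = f + (s(a)+s(b)-f) - s(a) = s(b) = \id_b$, and similarly $f\circ g = g+f-s(b) = s(a) = \id_a$. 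Hence $f$ is invertible with $f^{-1}=s(\partial_0(f))+s(\partial_1(f))-f$, and $\cC$ is a groupoid.

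I do not anticipate a genuine obstacle here; the lemma is essentially the classical ``internal categories in abelian categories are groupoids, with composition determined by addition'' statement, and the only thing to be careful about is the bookkeeping of sources and targets when forming the auxiliary composable pairs---making sure each degenerate pair really lives in $A_2$ and that the decomposition of $(f,g)$ is an honest identity in the fiber product. The one point worth stating explicitly in the write-up is \emph{why} $\mu$ is additive: it is a structure morphism of the internal category, which by definition is a morphism in $\Ab$, so it commutes with $+$; this is the hypothesis that makes the whole argument go through and is exactly what fails for categories internal to $\Sets$.
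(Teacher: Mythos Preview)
Your argument is correct and essentially identical to the paper's: both use that composition $\mu$ is a group homomorphism and reduce to composable pairs in which one entry is an identity morphism (the paper phrases this as ``$B$ is generated by $B_1$ and $B_2$'', while you write out the explicit decomposition $(f,g)=(f,s(a))+(s(a),g)-(s(a),s(a))$, which is the same thing made concrete).

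One minor bookkeeping slip in part~(ii): with your notation $a=\partial_0(f)$, $b=\partial_1(f)$, the middle object for the pair $(f,g)$ is $b$, so \eqref{e:circ in terms of +} gives $g\circ f=f+g-s(b)=s(a)=\id_a$, and likewise $f\circ g=g+f-s(a)=s(b)=\id_b$; you have $a$ and $b$ interchanged in those two computations. The conclusion that $g$ is a two-sided inverse is of course unaffected.
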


\begin{proof}
Let $B$ be the group of all pairs $(f,g)$ as in (i). The map 
\[
B\to A_1, \quad (f,g)\mapsto g\circ f
\]
is a group homomorphism by the definition of ``category internal to $\Ab$''. The map $B\to A_1$ given by \eqref{e:circ in terms of +} is also a group homomorphism.
The group $B$ is generated by $B_1$ and $B_2$, where $B_1$ (resp. $B_2$) is the group of all $(f,g)\in B$ such that $f$ (resp.~$g$) is an identity morphism. So it suffices to check \eqref{e:circ in terms of +} if either $f=\id_a$ or $g=\id_a$. This is clear because $s(a)$ is just another name for $\id_a$.

We have proved (i). Statement (ii) follows.
\end{proof}

Similarly to Lemma~\ref{l:category gives 1-truncated Sset}(i), one proves the following converse statement.

\begin{lem} \label{l:1-truncated Sset gives category}
Let $(A_0,\, A_1,\partial_0,\partial_1, s)$ be a 1-truncated simplicial abelian group. Then the operation \eqref{e:circ in terms of +} makes it into a category internal to $\Ab$.\qed
\end{lem}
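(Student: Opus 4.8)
The plan is to unwind the meaning of ``category internal to $\Ab$'' and then verify the axioms one at a time by direct computation, using only that $\partial_0,\partial_1,s$ are homomorphisms of abelian groups and that $\partial_0\circ s=\partial_1\circ s=\id_{A_0}$ (these last identities being part of the structure of a $1$-truncated simplicial abelian group). Explicitly, a category internal to $\Ab$ consists of abelian groups $A_0$ (objects) and $A_1$ (morphisms), homomorphisms $\partial_0$ (source), $\partial_1$ (target), $s$ (identity) with $\partial_0 s=\partial_1 s=\id_{A_0}$, and a composition homomorphism defined on the abelian group $B:=A_1\times_{A_0}A_1$ of composable pairs (fiber product over $\partial_1$ and $\partial_0$), subject to the unit and associativity axioms. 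So there are four things to check for the operation \eqref{e:circ in terms of +}.

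First I would check that \eqref{e:circ in terms of +} is well defined: for a composable pair $(f,g)$ with $a:=\partial_1(f)=\partial_0(g)$, applying $\partial_0$ and then $\partial_1$ to $f+g-s(a)$ and using $\partial_i s=\id_{A_0}$ yields $\partial_0(g\circ f)=\partial_0(f)$ and $\partial_1(g\circ f)=\partial_1(g)$, as required. Second, $(f,g)\mapsto f+g-s(\partial_1(f))$ is a homomorphism $B\to A_1$, being the sum of the two projections and $-\,s\circ\partial_1\circ\ppr_1$. Third, the unit axioms: if $\partial_1(f)=y$ then $s(y)$ and $f$ are composable and $s(y)\circ f=f+s(y)-s(y)=f$, and symmetrically $g\circ s(y)=g$ whenever $\partial_0(g)=y$. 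Fourth, associativity: given a composable triple $(f,g,h)$ with $\partial_1(f)=\partial_0(g)$ and $\partial_1(g)=\partial_0(h)$, the source/target formulas from the first step show that $(h\circ g)\circ f$ and $h\circ(g\circ f)$ are both legitimate composites, and expanding \eqref{e:circ in terms of +} twice shows that each equals $f+g+h-s(\partial_1 f)-s(\partial_1 g)$.

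I do not expect a genuine obstacle: this is the converse of Lemma~\ref{l:category gives 1-truncated Sset}(i), and every step is a one-line manipulation in abelian groups. The only point that needs a moment's attention is the bookkeeping in the associativity step — one must invoke the source/target computation of the first step to know the iterated composites are defined before the associativity identity can even be written — but this is immediate once those formulas are available. (One could instead imitate the proof of Lemma~\ref{l:category gives 1-truncated Sset}(i) and reduce associativity to the case in which one of $f,g,h$ is an identity, since $B$ and the analogous group of composable triples are generated by their degenerate elements; but the direct expansion is shorter.)
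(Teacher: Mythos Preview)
Your proposal is correct. The paper itself does not spell out a proof at all: it merely writes ``Similarly to Lemma~\ref{l:category gives 1-truncated Sset}(i), one proves the following converse statement'' and appends \qed. The implied argument would imitate that lemma's generation trick (reduce to composable pairs in which one factor is an identity, since $B$ is generated by $B_1$ and $B_2$), which you mention as an alternative in your final parenthetical. Your direct expansion is equally valid and arguably cleaner for a statement this short; the generation approach would only pay off if one were checking a more complicated identity than associativity of $f+g-s(\partial_1 f)$.
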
 

Proposition~\ref{p:categories internal to Rings} follows from Lemmas~\ref{l:category gives 1-truncated Sset}-\ref{l:1-truncated Sset gives category} and the following one.

\begin{lem} \label{l:goodness appears}
Let $(A_0,\, A_1,\partial_0,\partial_1, s)$ be a 1-truncated simplicial ring. Let
\[
B:=\{(f,g)\,|\, f,g\in A_1,\,\partial_1 (f)=\partial_0 (g) \}.
\]
In this situation, the map $B\to A_1$ defined by \eqref{e:circ in terms of +} is a ring homomorphism if and only if $(\Ker\partial_0 ) \cdot  (\Ker\partial_1 )=0$.
\end{lem}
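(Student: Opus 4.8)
The plan is to compute both sides of the putative ring homomorphism property directly on the ring $B$. Recall that $B$ is the ring of composable pairs, i.e.\ the fiber product $A_1\times_{A_0}A_1$ along $\partial_1$ and $\partial_0$; it inherits a ring structure from $A_1\times A_1$ since $\partial_1$ and $\partial_0$ are ring homomorphisms. The map $m\colon B\to A_1$ in question sends $(f,g)$ to $f+g-s(a)$ with $a=\partial_1(f)=\partial_0(g)$. By Lemma~\ref{l:category gives 1-truncated Sset}(i) (applied to the underlying simplicial abelian group), $m$ is already a homomorphism of additive groups and $m$ is compatible with the identity element: on the pair $(s(1),s(1))$ it returns $s(1)+s(1)-s(1)=s(1)=1_{A_1}$. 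So the only thing to check is multiplicativity.

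First I would fix two composable pairs $(f,g)$ and $(f',g')$ in $B$, write $a=\partial_1(f)=\partial_0(g)$ and $a'=\partial_1(f')=\partial_0(g')$, and note that the product in $B$ is $(ff',gg')$, which is composable because $\partial_1(ff')=aa'=\partial_0(gg')$. Thus
\[
m\big((f,g)(f',g')\big)=ff'+gg'-s(aa'),
\]
while
\[
m(f,g)\cdot m(f',g')=\big(f+g-s(a)\big)\big(f'+g'-s(a')\big).
\]
Expanding the right-hand side and using that $s$ is a ring homomorphism (so $s(a)s(a')=s(aa')$, $s(a)f'=s(\partial_0(s(a)))f'$, etc.) and the simplicial identities $\partial_0 s=\partial_1 s=\id$, I would reduce the difference $m(f,g)\cdot m(f',g')-m((f,g)(f',g'))$ to a sum of ``cross terms.'' The natural bookkeeping is to replace each of $f,g,f',g'$ by its ``error term'' relative to $s$: set $u=f-s(a)$, $v=g-s(a)$, $u'=f'-s(a')$, $v'=g'-s(a')$. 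Then $u,v\in\Ker(\text{some face map})$ and the cross terms collapse: one finds that the discrepancy equals $-(u v' + v u')$ or a similar expression, and by the very definitions $u\in\Ker\partial_1$ (since $\partial_1 f=a=\partial_1 s(a)$), $v\in\Ker\partial_0$, $u'\in\Ker\partial_1$, $v'\in\Ker\partial_0$. Hence each cross term lies in $(\Ker\partial_0)\cdot(\Ker\partial_1)$, and the discrepancy vanishes for all composable pairs precisely when $(\Ker\partial_0)\cdot(\Ker\partial_1)=0$. For the converse, I would specialize to pairs of the form $(s(0)+v,\,v'')$ with $v\in\Ker\partial_0$ and suitable $v''$ designed so that the lone surviving cross term is an arbitrary product $\xi\eta$ with $\xi\in\Ker\partial_0$, $\eta\in\Ker\partial_1$, forcing $\xi\eta=0$; here one must be a little careful to produce genuinely composable pairs, e.g.\ by using $f=s(a)+\xi$, $g=s(a)$, $f'=s(a')$, $g'=s(a')+\eta$ for appropriate $a,a'$ and checking the face-map conditions.

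The main obstacle I anticipate is purely organizational rather than conceptual: the expansion of the product $\big(f+g-s(a)\big)\big(f'+g'-s(a')\big)$ has nine terms, and pinning down exactly which combination of the simplicial identities makes all but the two ``genuinely mixed'' terms cancel against $ff'+gg'-s(aa')$ requires patient tracking. The cleanest route is the change of variables to the ``$s$-error'' coordinates $u,v,u',v'$ above, after which the additive part of the computation is transparent (it recovers \eqref{e:circ in terms of +}) and the multiplicative part isolates a single bilinear obstruction living in $(\Ker\partial_0)\cdot(\Ker\partial_1)$. Once that reduction is made, both the ``only if'' and the ``if'' directions are immediate, and the ``if'' direction for the converse just needs one well-chosen family of composable pairs realizing an arbitrary element of $(\Ker\partial_0)\cdot(\Ker\partial_1)$ as the obstruction.
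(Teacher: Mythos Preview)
Your proposal is correct and is essentially the same argument as the paper's, just organized by direct expansion rather than by a subgroup decomposition. The paper writes $B=(B_1\cap B_2)+J_1+J_2$ with $J_1=\{(0,g):g\in\Ker\partial_0\}$ and $J_2=\{(f,0):f\in\Ker\partial_1\}$, observes that $\varphi|_{B_1}$ and $\varphi|_{B_2}$ are already ring homomorphisms and that $J_1\cdot J_2=0$ in $B$, so the only obstruction is $\varphi(J_1)\cdot\varphi(J_2)=(\Ker\partial_0)(\Ker\partial_1)$; your change of variables $u=f-s(a)\in\Ker\partial_1$, $v=g-s(a)\in\Ker\partial_0$ is exactly this decomposition, and your cross term $uv'+vu'$ is exactly $\varphi(J_2)\varphi(J_1)+\varphi(J_1)\varphi(J_2)$. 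The paper's phrasing spares you the nine-term expansion, but the content is identical. (Minor note: your converse works cleanly with $a=a'=0$, $f=\eta$, $g=0$, $f'=0$, $g'=\xi$, which gives discrepancy $\eta\xi$; the more elaborate choice you sketch is unnecessary.)
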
 

\begin{proof}
Let $\varphi :B\to A_1$ be the map \eqref{e:circ in terms of +}. Let $B_1,B_2$ be as in the proof of Lemma~\ref{e:circ in terms of +}. Then $\varphi |_{B_1}$ and $\varphi |_{B_2}$ are automatically ring homomorphisms. One has
\[
B_1=(B_1\cap B_2)+J_1, \quad B_2=(B_1\cap B_2)+J_2,
\]
where $J_1:=\{(0,g)\,|\, g\in \Ker\partial_0 \}$,\, $J_2:=\{(f,0)\,|\, f\in\Ker\partial_1 ) \}$. Moreover, $J_1\cdot J_2=0$. 
So $\varphi :B\to A_1$  is a ring homomorphism if and only if $\varphi(J_1)\cdot \varphi(J_2)=0$, which means that
$(\Ker\partial_0 ) \cdot  (\Ker\partial_1 )=0$.
\end{proof}

\subsubsection{The equivalence $\cQ\iso\RGrpds'_2\,$}    \label{sss:Q=RGrpds'_2}
Recall that $\RGrpds'_2$ is the category of groupoids internal to $\Rings$, and $\cQ$ is the category of all triples $(C,I,d)$, where $C$ is a ring and $(I,d:I\to C)$ is a quasi-ideal in~$C$. In the proof of  Lemma~\ref{l:Q and SRings_le 1} we constructed an equivalence $\cQ\iso\SRings_{\le 1}^{\good}$. Composing it with the equivalence 
$\SRings_{\le 1}^{\good}\iso\RGrpds'_2$ from Proposition~\ref{p:categories internal to Rings}, we get an equivalence
$$\cQ\iso\RGrpds'_2\,.$$
We denote it as follows:
\[
(C,I,d)\mapsto\Cone (d)=\Cone(I\overset{d}\longrightarrow C).
\]
Let us now give a description of $\Cone (d)$  (the reader may prefer to use it as a definition). 

\subsubsection{Explicit description of $\Cone (d)$}   \label{sss:description of Cone (d)}
Let $C$ be a ring and $(I,d:I\to C)$ a quasi-ideal in~$C$. Then $\Cone(I\overset{d}\longrightarrow C)$ is a groupoid internal to $\Rings$, whose set of objects is the ring $C$ and whose morphisms are labeled by $C\times I$. The morphism corresponding to $c\in C$ and $x\in I$ is a morphism $c\to c+dx$, denoted by $f_{c,x}\, $. Morphisms are composed as follows:
\[
f_{c+dx,y}\circ f_{c,x}=f_{c,x+y}\,.
\]
Finally, the ring structure on the set of morphisms is given by
\[
f_{c,x}+f_{c',x'}=f_{c+c',x+x'}\,,
\]
\begin{equation}  \label{e:product of morphisms}
f_{c,x}\cdot f_{c',x'}=f_{cc',y}\, , \mbox{ where } y=cx'+c'x+x\cdot dx'=cx'+c'x+x'\cdot dx.
\end{equation}

Note that as a usual groupoid (rather than a groupoid internal to $\Rings$), $\Cone (d)$ is just the \emph{quotient groupoid\,} of $C$ by the following action of $I$: an element $x\in I$ takes $c\in C$ to~$c+dx$.

\subsubsection{Quasi-isomorphisms in $\cQ$}   \label{sss:q-isomorphisms in Q}
Let $f:(C,I,d)\to (C',I',d')$ be a morphism in $\cQ$. It induces a functor $\Cone (d)\to\Cone (d')$ between the corresponding groupoids. Using \S\ref{sss:description of Cone (d)}, one checks that this functor is an equivalence if and only if $f$ is a quasi-isomorphism (which means that $f$ induces isomorphisms $\Ker d\iso\Ker d'$ and $\Coker d\iso\Coker d'$).

\subsubsection{Motivation of the $\Cone$ notation}
If $C$ is any ring then $\Cone(0\to C)$ identifies with~$C$ (viewed as discrete groupoid).
In general, the morphism $(C,I,d)\to (C/d(I),0,0)$ induces a functor
\[
\Cone(I\overset{d}\longrightarrow C)\to\Cone (0\to  C  /d(I))=C/d(I),
\]
and if $\Ker d=0$ this functor is an equivalence (but not an isomorphism, unless $I=0$).

Let us note that in the context of abelian groups (instead of rings and quasi-ideals) the groupoid $\Cone (d)$ is considered in \cite[Expos\'e XVIII, \S 1.4]{SGA4}, where it is denoted by ${\rm ch}(d)$. It is proved there that the $(2,1)$-category of strictly commutative Picard groupoids\footnote{In our language, this is the $(2,1)$-category $\AbGrpds$, see \S\ref{sss:Ab instead of Rings}-\ref{sss:Picard groupoids}.} is canonically equivalent to the full subcategory of the DG category of complexes of abelian groups formed by complexes with cohomology concentrated in degrees $-1$ and $0$; moreover, this equivalence takes ${\rm ch}(d)$ to the usual cone of $d$ (i.e., to the complex $0\to I\overset{d}\longrightarrow C\to 0$ placed in degrees $-1$ and $0$). This is our main motivation for writing $\Cone (d)$
instead of ${\rm ch}(d)$.

\subsection{The parallel story for groups}
This subsection and \S\ref{ss:Eilenberg-Zilber} can be skipped by the reader.

\subsubsection{Abelian groups}
By Lemmas~\ref{l:category gives 1-truncated Sset}-\ref{l:1-truncated Sset gives category}, the 1-category of categories internal to $\Ab$ (or equivalently, groupoids internal to $\Ab$)  identifies with the category of all 1-truncated simplicial abelian groups. Similarly to Lemma~\ref{l:Q and SRings_le 1}, the latter identifies with the category of triples $(C,I,d)$, where $C,I\in\Ab$ and $d:I\to C$ is a homomorphism (this is a ``baby version'' of the Dold-Kan equivalence).

\subsubsection{Arbitrary groups}
Let $\Groups$ denote the category of all groups. Let $\SGroups_{\le 1}^{\good}$ be the category of 1-truncated simplicial groups 
$(G_0,\, G_1,\partial_0,\partial_1, s)$ such that $\Ker\partial_0$ centralizes $\Ker\partial_1$ (this condition is somewhat similar to \eqref{e:KerKer=0}).
One can check that Proposition~\ref{p:categories internal to Rings} and Lemma~\ref{l:Q and SRings_le 1} remain valid if one replaces $\Rings,\SRings_{\le 1}^{\good}$ by $\Groups, \SGroups_{\le 1}^{\good}$ and also replaces the category $\cQ$ from \S\ref{sss:quasi-ideals} by the category of \emph{crossed modules.}

The notion of crossed module is due to J.~H.~C.~Whitehead \cite{Wh}. For an overview of it, see \cite[\S 6.6.12]{Wei}, \cite{N1}, and references therein.

\subsection{Remarks on DG rings}   \label{ss:Eilenberg-Zilber}
This subsection can be skipped by the reader.

\subsubsection{Notation}
Let $\DGRings^{\le 0}$ (resp.~$\DGRings^{0,-1}$) be the category of DG rings $R$ such that $R^i=0$ for $i>0$ (resp. for $i\ne 0,-1$).

\subsubsection{DG rings via Eilenberg-Zilber} 
The normalized chain complex of a simplicial ring has a DG ring structure, which is defined via the Eilenberg-Zilber map. 
One can check that the functor $\cQ\to\DGRings^{\le0}$ from \S\ref{sss:Q & DGRings} is isomorphic to the composite functor
\[
\cQ\to\RGrpds'_2\mono\SRings\overset{\cN}\longrightarrow\DGRings^{\le0} ,
\]
where the first arrow is as in \S\ref{sss:Q=RGrpds'_2}, the second one takes a groupoid to its nerve, and $\cN$ is the functor of normalized chains. We will not use this fact.

\subsubsection{Simplicial rings via Alexander-Whitney} 
Recall that the functor $\cQ\to\DGRings^{\le0}$ induces an equivalence $\cQ\iso\DGRings^{0,-1}\subset\DGRings^{\le0}$. Let us discuss the composite functor
\begin{equation}   \label{e:DGRings to SRings}
\DGRings^{0,-1}\iso\cQ\to\RGrpds'_2\mono\SRings.
\end{equation}

Let $\bf Rings$  be the category of unital associative but not necessarily commutative rings. The functor $\cN :\SRings\to\DGRings^{\le0}$ extends to a functor
$$\cN :\rm S\bf Rings\to\rm DG\bf Rings^{\le 0}.$$ 
The latter has a canonical right inverse $\Gamma :\rm DG\bf Rings^{\le 0}\to S\bf Rings$, which is defined using the cup product on the cochain complexes of certain simplicial sets (i.e., using the Alexander-Whitney map). 
The functor $\Gamma$ does not preserve commutativity (because the $\cup$-product is not commutative at the level of cochains); in other words,
\begin{equation}  \label{e: no inclusion}
\Gamma (\DGRings^{\le0} )\not\subset\SRings.
\end{equation}
However, one can check that 
\begin{equation}  \label{e: inclusion holds}
\Gamma (\DGRings^{0,-1} )\subset\SRings, 
\end{equation}
and the functor \eqref{e:DGRings to SRings} is isomorphic to
$\Gamma :\DGRings^{0,-1} \to\SRings$. We will not use this fact.

Here is a way to believe in formulas \eqref{e: no inclusion}-\eqref{e: inclusion holds} (or even to prove them): in formula \eqref{e:product of morphisms} we have
$x\cdot dx'=x'\cdot dx$ because $x'\cdot dx-x\cdot dx'=d(xx')$ and $xx'=0$ (thus the condition $xx'=0$ is very essential).

\section{The Aldrovandi-Noohi model of the $(2,1)$-category $\RGrpds$} \label{s:RGrpds_AN}
In this section we define a $(2,1)$-category $\RGrpds_\AN$. and prove Theorem~\ref{t:main}, which says that $\RGrpds_\AN$ is canonically equivalent to $\RGrpds$, i.e., to the $(2,1)$-category of ring groupoids defined in \S\ref{s:the 2-category}. This theorem is a variant of the main result of \cite{Al2} (the main difference is that in \cite{Al2} rings are not assumed to be commutative).

The definition of $\RGrpds_\AN$ follows the ideas\footnote{The only difference is that Noohi  \cite{N0} considers (noncommutative) groups rather than rings and crossed modules rather than objects of $\DGRings^{0,-1}$.} of B~Noohi \cite{N0}, which were further developed in the works by E.~Aldrovandi and B.~Noohi 
\cite{AN1,AN2, Al1,Al2, N0,N2}. The symbol $\AN$ stands for Aldrovandi-Noohi and also for \emph{``anamorphism''}(see \S\ref{sss:Three classes}) and \emph{``anafunctor''} (the latter notion will be recalled in~\S\ref{s:anafunctors}).

\subsection{The 2-category of correspondences}  \label{ss:correspondences}
Let $\cC$ be a category in which 
finite fiber products always exist. Then one defines the 2-category of correspondences $\Corr (\cC )$ as follows.

(i) The objects of $\Corr (\cC )$ are those of $\cC$.

(ii) For $c_1,c_2\in\cC$, the category of $\Corr (\cC )$-morphisms is defined to be the category of 
diagrams\footnote{A morphism from a diagram $c_1\overset{f}\longleftarrow c_{12}\overset{g}\longleftarrow c_2$ to a diagram $c_1\overset{f'}\longleftarrow c'_{12}\overset{g'} \longleftarrow c_2$ is a morphism  $h:c_{12}\to c'_{12}$ such that $f'h=f$ and $g'h=g$.} $c_1\leftarrow c_{12}\to c_2$ in $\cC$.
This category is denoted by
$\Corr (c_1,c_2)$, and its objects are called \emph{correspondences from $c_1$ to $c_2\,$}. 

(iii) The composition of correspondences $c_1\leftarrow c_{12}\to c_2$ and $c_2\leftarrow c_{23}\to c_3$ is defined to be the correspondence
$c_1\leftarrow c_{12}\times_{c_2} c_{23}\to c_3\,$.

Let us note that correspondences are also called \emph{spans} (e.g., in \cite[\S 9.1]{N0}).

\subsection{Correspondences in $\DGRings^{0,-1}$}   \label{ss: Corr in DGRings}
\subsubsection{Recollections on $\DGRings^{0,-1}$}
Recall that $\DGRings^{0,-1}$ stands for the category of DG~rings $R$ such that $R^i=0$ for $i\ne 0,-1$. This category is one of the incarnations of the 1-category of ring groupoids (see \S\ref{sss:Q & DGRings} and \S\ref{ss:Q=RGrpds'}).

\subsubsection{Three classes of correspondences}  \label{sss:Three classes}
Let $R_1,R_2\in\DGRings^{0,-1}$. According to \S\ref{ss:correspondences}, a correspondence from $R_1\to R_2$ is just a diagram
\begin{equation}   \label{e:correspondence in DGRings}
R_1\overset{f}\longleftarrow R_{12}\overset{g}\longrightarrow R_2
\end{equation}
in~$\DGRings^{0,-1}$.

We say that a correspondence \eqref{e:correspondence in DGRings} is \emph{admissible} (resp.~\emph{weakly admissible}) if $f$ is a quasi-isomorphism and the map
$R_{12}^{-1}\to R_{1}^{-1}\times R_{2}^{-1}$ is an isomorphism (resp.~epimorphism). We say that a \eqref{e:correspondence in DGRings} is an \emph{anamorphism}\footnote{Anamorphisms are analogous to \emph{anafunctors}, see \S\ref{sss:Anafunctors} below.} from $R_1$ to $R_2$ if $f$ is a surjective\footnote{The class of \emph{surjective} quasi-isomorphisms is good for us because it is closed under pullbacks (i.e., if $f:R'\to R$ is a surjective quasi-isomorphism then so is $\tilde f:R'\times_R\tilde R\to\tilde R$). Without surjectivity this would be false. See also \S\ref{sss:composition remarks}(i) below.} quasi-isomorphism.

Let $\Corr_\adm (R_1,R_2)$ (resp.~$\Corr_\wadm (R_1,R_2)$) be the category of admissible (resp.~weakly admissible) correspondences from $R_1$ to $R_2$.
Let $\Corr_\ana (R_1,R_2)$ be the category of anamorphisms from $R_1$ to $R_2$. Then
$$\Corr_\adm (R_1,R_2)\subset\Corr_\wadm (R_1,R_2)\subset\Corr_\ana (R_1,R_2).$$

\subsubsection{Admissible correspondences via ``butterflies''}   \label{sss:butterflies}
Admissible correspondences have the following description, which I learned from the works by Aldrovandi and Noohi (e.g., see \cite[Def.~8.1]{N0} or \cite[\S 4.1.3]{AN1}). Note that
given an admissible correspondence \eqref{e:correspondence in DGRings}, one can use the isomorphism  $R_{12}^{-1}\iso R_{1}^{-1}\times R_{2}^{-1}$ to write 
$d:R_{12}^{-1}\to R_{12}^0$ as a pair of maps $h_i:R_i^{-1}\to R_{12}^0$ for~$i=1,2$. Thus we see that an admissible correspondence \eqref{e:correspondence in DGRings} is the same as a commutative diagram
\begin{equation}   \label{e:butterfly}
  \vcenter{
  \xymatrix@=1pc{
    R_1^{-1}\ar[dd]_d \ar[dr]^{h_1}  & &R_2^{-1} \ar[dl]_{h_2} \ar[dd]^d\\
    & R_{12}^0\ar[dl]_{f^0} \ar[dr]^{g^0} &  \\
    R_1^0 & & R_2^0
  }}
\end{equation}
with the following properties:

(i) $R_{12}^0$ is a ring, and $f^0,g^0$ are ring homomorphisms;

(ii) $h_1$ and $h_2$ are $R_{12}^0$-module homomorphisms assuming that the $R_{12}^0$-module structure on $R_1^{-1}$ (resp.~$R_2^{-1}$) is defined via
$f^0$ (resp.~$g^0$);

(iii) the NW-SE sequence in \eqref{e:butterfly} is a complex, and the NE-SW sequence is exact, i.e., 
\[
\Ker h_2=0 ,\quad \im h_2=\Ker f^0, \quad \Coker f^0=0.
\]
A commutative diagram \eqref{e:butterfly} with properties (i)-(iii) is called a \emph{butterfly}.

\begin{lem}   \label{l:Corr_adm is a groupoid}
The category $\Corr_\adm (R_1,R_2)$ is a groupoid.
\end{lem}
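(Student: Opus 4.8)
The plan is to show that every morphism of admissible correspondences from $R_1$ to $R_2$ is an isomorphism; equivalently, using the butterfly description of \S\ref{sss:butterflies}, that any morphism between butterflies is invertible. Let $h:R_{12}\to R'_{12}$ be a morphism of admissible correspondences, i.e.\ $h^0:R_{12}^0\to (R'_{12})^0$ is a ring homomorphism compatible with all the data and $h^{-1}:R_{12}^{-1}\to (R'_{12})^{-1}$ is the induced map on degree $-1$ parts. Under the identifications $R_{12}^{-1}\iso R_1^{-1}\times R_2^{-1}\iso (R'_{12})^{-1}$ the map $h^{-1}$ is forced to be the identity, since $h$ commutes with the projections to $R_i^{-1}$; so only $h^0$ needs to be inverted.

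First I would check injectivity of $h^0$. If $a\in\Ker h^0$, then applying $f^0$ and $g^0$ and using that $h$ commutes with $f^0,g^0$ and $(f')^0,(g')^0$ gives $(f')^0(h^0 a)=f^0(a)$ (wait --- more precisely $(f')^0 h^0 = f^0$), hence $f^0(a)=0$; by exactness of the NE-SW sequence in \eqref{e:butterfly}, $a=h_2(y)$ for a unique $y\in R_2^{-1}$, with $\Ker h_2=0$. But then $h^0 h_2(y)=0$, and since $h$ respects $h_2$ we get $h'_2(y)=0$ (viewing $y\in (R'_{12})^{-1}$ via the identity on degree $-1$ parts), so $\Ker h'_2=0$ forces $y=0$, hence $a=0$. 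Next, surjectivity: given $b\in (R'_{12})^0$, the element $(f')^0(b)\in R_1^0$ equals $f^0(a)$ for some $a\in R_{12}^0$ because $\Coker f^0=0$; then $b-h^0(a)$ lies in $\Ker (f')^0=\im h'_2$, so $b-h^0(a)=h'_2(y)=h^0(h_2(y))$ for some $y\in R_2^{-1}$ (again using that $h$ respects $h_2$ and that degree $-1$ parts are identified), whence $b=h^0(a+h_2(y))\in\im h^0$. Thus $h^0$ is a bijective ring homomorphism, so its inverse is again a ring homomorphism, and one checks directly that this inverse is compatible with all the butterfly data, giving an inverse morphism of correspondences.

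I expect the main obstacle to be purely bookkeeping: keeping straight the various module structures (the $R_{12}^0$-module structure on $R_i^{-1}$ is defined via $f^0$ or $g^0$, and changes to the $(R'_{12})^0$-module structure after applying $h$) and verifying that the set-theoretic inverse of $h^0$ is compatible with $h_1,h_2$ as well as with $(f')^0,(g')^0$ --- this last is automatic once $h^0$ is an isomorphism of rings because all the structure maps are determined by equations that $h$ already satisfies. There is no real difficulty; the only subtle point worth flagging is exactly where surjectivity of $f$ (as opposed to mere quasi-isomorphism) is used --- in fact here $\Coker f^0 = 0$ is built into condition (iii) of the butterfly, so the argument is clean. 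Alternatively, one could bypass butterflies entirely and argue directly with correspondences in $\DGRings^{0,-1}$: a morphism $h:R_{12}\to R'_{12}$ of admissible correspondences induces the identity on degree $-1$ parts (via the isomorphisms to $R_1^{-1}\times R_2^{-1}$) and a quasi-isomorphism on the underlying complexes (since $f,f'$ are quasi-isomorphisms and $f' h = f$), and a map in $\DGRings^{0,-1}$ which is an isomorphism in degree $-1$ and a quasi-isomorphism is automatically an isomorphism in degree $0$ --- this is the same computation packaged differently.
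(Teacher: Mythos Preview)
Your proof is correct and follows exactly the approach the paper takes: reduce to butterflies via \S\ref{sss:butterflies} and then use exactness of the NE-SW sequence to show that any morphism of butterflies is an isomorphism. The paper states this in one sentence (``This is clear because the NE-SW sequence in \eqref{e:butterfly} is exact''), while you have spelled out the five-lemma-style diagram chase in detail; your alternative packaging at the end (identity in degree $-1$ plus quasi-isomorphism forces isomorphism in degree $0$) is also fine and amounts to the same thing.
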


\begin{proof}
We have to show that any morphism in $\Corr_\adm (R_1,R_2)$ is an isomorphism. 
By \S\ref{sss:butterflies}, it suffices to prove a similar statement for morphisms of butterflies. This is clear because the NE-SW sequence in \eqref{e:butterfly} is exact.
\end{proof}

\begin{lem}   \label{l:composition of weakly admissible}
The composition of weakly admissible correspondences is weakly admissible. The composition of anamorphisms is an anamorphism. \qed
\end{lem}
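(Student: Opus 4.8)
The plan is to verify each of the two composition claims in \lemref{l:composition of weakly admissible} directly from the definitions in \S\ref{sss:Three classes}, chasing the relevant maps through the composition formula of \S\ref{ss:correspondences}(iii). So suppose we are given $R_1\overset{f}\longleftarrow R_{12}\overset{g}\longrightarrow R_2$ and $R_2\overset{f'}\longleftarrow R_{23}\overset{g'}\longrightarrow R_3$, and form the composite correspondence $R_1\overset{F}\longleftarrow R_{123}\overset{G}\longrightarrow R_3$ with $R_{123}:=R_{12}\times_{R_2}R_{23}$, $F=f\circ\ppr_{12}$, $G=g'\circ\ppr_{23}$. First I would treat the weakly admissible case: I must show (a) $F$ is a quasi-isomorphism, and (b) $R_{123}^{-1}\to R_1^{-1}\times R_3^{-1}$ is an epimorphism. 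For (a), the key point is that $\ppr_{12}\colon R_{123}\to R_{12}$ is the pullback of $f'$, hence a quasi-isomorphism because $f'$ is one and, crucially, because $f'$ is \emph{surjective} (the footnote to \S\ref{sss:Three classes} flags exactly this: surjective quasi-isomorphisms are stable under pullback in $\DGRings^{0,-1}$) — wait, here $f'$ is only assumed to be a quasi-isomorphism, not surjective, so I should instead argue via the degreewise description of the pullback and the long exact sequence, or observe that the pullback of a quasi-isomorphism along an arbitrary map of connective DG rings concentrated in degrees $0,-1$ is still a quasi-isomorphism once one also knows something about the $(-1)$-maps. The cleanest route is to use the degreewise formula $R_{123}^{i}=R_{12}^i\times_{R_2^i}R_{23}^i$ together with weak admissibility of the two input correspondences. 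This is the step I expect to be the main obstacle — making the quasi-isomorphism of $F$ rigorous without surjectivity of $f'$ — so let me spell out what I would do there.

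The concrete strategy for (a): weak admissibility of $R_2\overset{f'}\longleftarrow R_{23}$ says $f'$ is a quasi-isomorphism and $R_{23}^{-1}\to R_2^{-1}\times R_3^{-1}$ is an epimorphism; in particular $f'^{-1}\colon R_{23}^{-1}\to R_2^{-1}$ is an epimorphism (it is the first component of that surjection, composed with the projection — actually it is literally $R_{23}^{-1}\twoheadrightarrow R_2^{-1}\times R_3^{-1}\to R_2^{-1}$, which is surjective). So $f'$ is automatically a \emph{surjective} quasi-isomorphism in degree $-1$; and $f'^0$ is surjective because $f'$ being a quasi-isomorphism forces $\Coker(f'^0)$ to inject into $H^0$ of the cone... hmm, more simply: $H^0(f')$ surjective means $R_2^0/(\ppr\text{-image})$ is killed, but I need $f'^0$ itself surjective, which follows since $R_2^i=0$ for $i>0$ makes $\Coker(f'^0)=H^1(\Cone f')=0$. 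Hence $f'$ is a surjective quasi-isomorphism, the footnote applies verbatim, $\ppr_{12}$ is a surjective quasi-isomorphism, and $F=f\circ\ppr_{12}$ is a quasi-isomorphism as a composite of two. For (b), I chase elements: given $(a_1,a_3)\in R_1^{-1}\times R_3^{-1}$, surjectivity of $R_{12}^{-1}\to R_1^{-1}\times R_2^{-1}$ lifts $a_1$ to some $b\in R_{12}^{-1}$ with image $a_2\in R_2^{-1}$; then surjectivity of $R_{23}^{-1}\to R_2^{-1}\times R_3^{-1}$ lifts $(a_2,a_3)$ to $c\in R_{23}^{-1}$; the pair $(b,c)$ agrees over $R_2^{-1}$, hence lies in $R_{123}^{-1}=R_{12}^{-1}\times_{R_2^{-1}}R_{23}^{-1}$ and maps to $(a_1,a_3)$.

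For the anamorphism case the argument is shorter: an anamorphism is a correspondence whose left leg is a \emph{surjective} quasi-isomorphism, and now the footnote to \S\ref{sss:Three classes} is used exactly as intended — $f'$ surjective quasi-isomorphism implies its pullback $\ppr_{12}$ is a surjective quasi-isomorphism, so $F=f\circ\ppr_{12}$ is surjective (composite of surjections) and a quasi-isomorphism (composite of quasi-isomorphisms), i.e. the composite correspondence is again an anamorphism. One should also record that these composites are genuinely correspondences in $\DGRings^{0,-1}$, i.e. that $R_{123}$ lies in $\DGRings^{0,-1}$ — immediate since finite limits in $\DGRings^{0,-1}$ are computed degreewise and the category is closed under them. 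I would also remark in passing that the inclusion $\Corr_\wadm\subset\Corr_\ana$ from \S\ref{sss:Three classes} shows the two statements of the lemma are logically independent (a weakly admissible correspondence need not be an anamorphism and vice versa), which is why both are proved. That is all the lemma requires; no further calculation is needed beyond the element chase above, which I would leave to the reader as "routine."
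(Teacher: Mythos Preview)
Your proof is essentially correct and is the natural direct verification; the paper itself offers no proof (the lemma is stated with a bare \qed). Two small corrections are worth noting.

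First, in arguing that $f'^0$ is surjective you write ``$\Coker(f'^0)=H^1(\Cone f')=0$''. This is off by one: since $R_2^i=0$ for $i>0$, the relevant group is $H^0(\Cone f')=R_2^0/\big(d(R_2^{-1})+f'^0(R_{23}^0)\big)$, which vanishes because $f'$ is a quasi-isomorphism. Combined with the surjectivity of $f'^{-1}$ (which you correctly deduce from the weak admissibility hypothesis), this gives $d(R_2^{-1})=d\,f'^{-1}(R_{23}^{-1})=f'^0\big(d(R_{23}^{-1})\big)\subset f'^0(R_{23}^0)$, hence $f'^0$ is surjective. Your conclusion is right; only the bookkeeping slipped.

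Second, your closing parenthetical remark contradicts the very inclusion you cite: $\Corr_\wadm\subset\Corr_\ana$ says precisely that every weakly admissible correspondence \emph{is} an anamorphism --- and your own surjectivity argument for $f'$ is a proof of this. The two statements of the lemma are still logically independent, but because anamorphisms form a strictly \emph{larger} class than weakly admissible correspondences, not an incomparable one.
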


\subsubsection{Remarks}   \label{sss:composition remarks}
(i) The class of correspondences $R_1\overset{f}\longleftarrow R_{12}\to R_2$ such that $f$ is a quasi-isomorphism is \emph{not} closed under composition of correspondences.

(ii) The composition of admissible correspondences is  \emph{not admissible,} in general. To cure this, one uses the \emph{admissibilization functor} constructed in the next subsection.

\subsection{Admissibilization}   \label{ss:Admissibilization}
\begin{prop}     \label{p:Adm}
The inclusion functor $\Corr_\adm (R_1,R_2)\mono\Corr_\ana (R_1,R_2)$ has a left adjoint functor $\Adm :\Corr_\ana (R_1,R_2)\to\Corr_\adm (R_1,R_2)$.
\end{prop}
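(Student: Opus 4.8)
The plan is to construct $\Adm$ explicitly and verify the adjunction by exhibiting a unit natural transformation together with the required universal property. Given an anamorphism $R_1\xleftarrow{f}R_{12}\xrightarrow{g}R_2$ with $f$ a surjective quasi-isomorphism, I would first pass to the description of the data in degrees $0$ and $-1$: a ring $R_{12}^0$ with homomorphisms $f^0,g^0$ to $R_1^0,R_2^0$, and $R_{12}^{-1}$ with differential $d$ and maps to $R_1^{-1},R_2^{-1}$ compatibly. The obstruction to admissibility is that the map $R_{12}^{-1}\to R_1^{-1}\times R_2^{-1}$ need not be an isomorphism. To fix this, set $\Adm$ of the correspondence to be the butterfly whose central object is the same ring $R_{12}^0$, whose down-maps $f^0,g^0$ are unchanged, but whose top object is forced to be $R_1^{-1}\times R_2^{-1}$ — more precisely, I would take the pullback/pushout dictated by the butterfly axioms. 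Concretely, the new $R^{-1}$ should be $\Ker f^0 \times R_2^{-1}$, or equivalently the fiber product over $R_1^{-1}$ and $R_2^{-1}$ of the appropriate maps, so that the NE–SW sequence $0\to R_2^{-1}\to \Adm^0\to R_1^0\to 0$ becomes exact; here surjectivity of $f^0$ (which follows from $f$ being a surjective quasi-isomorphism and being onto in degree $-1$ as required by the anamorphism condition at degree $-1$) guarantees $\Coker f^0=0$, and $f$ being a quasi-isomorphism guarantees $\im h_2=\Ker f^0$ after the replacement. One checks $\Adm$ of the correspondence satisfies (i)–(iii) of \S\ref{sss:butterflies}, hence lies in $\Corr_\adm(R_1,R_2)$.

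Next I would produce the unit. There is an evident map of correspondences from $R_{12}$ to $\Adm(R_{12})$: it is the identity on the degree-$0$ ring and on the legs, and in degree $-1$ it is the canonical map $R_{12}^{-1}\to R_1^{-1}\times R_2^{-1}$ (respectively to whatever explicit model I use for $\Adm^{-1}$). Because $f$ is a quasi-isomorphism, this map is itself a quasi-isomorphism, and it is a morphism of correspondences since it commutes with the two legs by construction. This defines a natural transformation $\eta\colon \mathrm{id}\Rightarrow \iota\circ\Adm$ where $\iota$ is the inclusion of \propref{p:Adm}.

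Then I would verify the universal property: for an admissible correspondence $S = (R_1\leftarrow S_{12}\to R_2)$ and a morphism of anamorphisms $\varphi\colon R_{12}\to \iota(S)$, I must show $\varphi$ factors uniquely through $\eta$. Since $S$ is admissible, $S^{-1}\iso R_1^{-1}\times R_2^{-1}$; the map $\varphi$ in degree $-1$ is then determined by its two components, which are the legs composed with $\varphi$, and these already factor through the projection $R_{12}^{-1}\to R_1^{-1}\times R_2^{-1}$. In degree $0$ there is nothing to change. This both produces the factorization and forces its uniqueness. I expect the main obstacle to be purely bookkeeping: checking that the explicit model for $\Adm^{-1}$ carries a well-defined $\Adm^0$-module structure making $h_1,h_2$ module homomorphisms and making $d$ satisfy the quasi-ideal/Leibniz relation, i.e., genuinely lands in $\DGRings^{0,-1}$ rather than merely in diagrams of complexes — in other words, the care already flagged in the footnote of \S\ref{sss:Three classes} about why surjective quasi-isomorphisms (closed under pullback) are the right class. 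Once the explicit construction is pinned down, naturality and the adjunction identities are routine.
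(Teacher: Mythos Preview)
Your construction has a genuine gap: you cannot keep $R_{12}^0$ unchanged. For a general anamorphism the butterfly exactness condition $R_2^{-1}\iso\Ker f^0$ simply fails for the original $R_{12}^0$, so there is no way to put a differential $R_1^{-1}\times R_2^{-1}\to R_{12}^0$ making the NE--SW sequence exact. Concretely, take $R_1=R_2=\BZ$ (in degree~$0$), let $R_{12}^0=\BZ[x]$, $R_{12}^{-1}=\BZ[x]$ with $d=$ multiplication by $x$, and let both legs be evaluation at $0$. Then $f$ is a surjective quasi-isomorphism, so this is an anamorphism; but $R_1^{-1}\times R_2^{-1}=0$ while $\Ker f^0=(x)\ne 0$, so your proposed ``same $R_{12}^0$, new $R^{-1}$'' butterfly cannot exist. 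Relatedly, your unit map ``identity in degree~$0$'' cannot be right, and your parenthetical ``onto in degree $-1$ as required by the anamorphism condition'' conflates anamorphisms with \emph{weakly} admissible correspondences (the map $R_{12}^{-1}\to R_1^{-1}\times R_2^{-1}$ need not be surjective for an anamorphism).

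The missing idea is that you must modify degree~$0$ as well: set $\tilde R_{12}^{-1}:=R_1^{-1}\times R_2^{-1}$ and define $\tilde R_{12}^0$ by the \emph{pushout} of $R_{12}^0\overset{d}\longleftarrow R_{12}^{-1}\overset{(f^{-1},g^{-1})}\longrightarrow R_1^{-1}\times R_2^{-1}$ in abelian groups. The fact that $f$ is a quasi-isomorphism ensures $\Ker(H^{-1}(R_{12})\to H^{-1}(R_1\times R_2))=0$, which is exactly what makes $R_{12}\to\tilde R_{12}$ a quasi-isomorphism. The remaining work---and the only genuinely delicate point---is to show that $\tilde R_{12}$ carries a unique DG ring structure compatible with the maps to $R_1\times R_2$; this follows because $\tilde R_{12}^0=\psi^0(R_{12}^0)+\tilde d(\tilde R_{12}^{-1})$ and the Leibniz rule forces the products of elements from $\tilde d(\tilde R_{12}^{-1})$. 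Once this is in place, the universal property you sketch (factoring any map to an admissible $S$ through $\eta$) goes through, but now with $\psi^0:R_{12}^0\to\tilde R_{12}^0$ in place of the identity. In the example above the pushout gives $\tilde R_{12}^0=\BZ[x]/(x)=\BZ$, as it must. Your approach \emph{is} essentially correct in the weakly admissible case, where the paper's description (Lemma~\ref{l:weakly admissible}(iv)) is a quotient rather than a pushout---but that is a strictly smaller class.
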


 The functor $\Adm :\Corr_\ana (R_1,R_2)\to\Corr_\adm (R_1,R_2)$ is called \emph{admissibilization.} Let us note that the restriction of $\Adm$ to 
 $\Corr_\wadm (R_1,R_2)\subset\Corr_\ana (R_1,R_2)$ has a very simple description, see Lemma~\ref{l:weakly admissible}(iv) below.

\begin{proof}
Let $R_1\overset{f}\longleftarrow R_{12}\overset{g}\longrightarrow R_2$ be an anamorphism. Let $L:=R_1\times R_2.$. Let $\varphi:R_{12}\to L$ be given by $(f,g)$. Note that since $f$ is a quasi-isomorphism\footnote{This is the only place in the proof where we use that $f$ is an anamorphism. In particular, we do not use surjectivity of $f$.}, we have
\begin{equation}  \label{e:injectivity on H_1}
\Ker (H^{-1}(R_{12})\to H^{-1}(L))=0,
\end{equation}
(as usual, $H^i$ stands for the $i$-th cohomology).

Let $\fC$ be the category of factorizations of $\varphi$ as
\begin{equation}  \label{e:factorization}
R_{12}\overset{\psi}\longrightarrow \tilde R_{12}\overset{\chi}\longrightarrow L
\end{equation}
such that the correspondence
\begin{equation}   \label{e:correspondnece to construct}
R_1\leftarrow \tilde R_{12}\to R_2
\end{equation}
given by $\chi$ is admissible. By Lemma~\ref{l:Corr_adm is a groupoid}, $\fC$ is a groupoid. We have to show that $\fC$ is a point.

Admissibility of \eqref{e:correspondnece to construct} is equivalent to the following properties of \eqref{e:factorization}: $\psi$ is a quasi-isomorphism, and 
$\chi^{-1}:\tilde R_{12}^{-1}\to L^{-1}$ is an isomorphism. So  as a complex of abelian groups, 
$\tilde R_{12}$ has to be as follows: $\tilde R_{12}^{-1}=L^{-1}$, and the pair
$(\tilde R_{12}^0, \tilde d:R_{12}^{-1}\to R_{12}^0)$ is determined by the push-out diagram
\[
\xymatrix{
R_{12}^{-1}\ar[r]^{\varphi^{-1}} \ar[d]_d & \;\; L^{-1}\ar[d]^{\tilde d}&\!\!\!\!\!\!\!\!\!\!\!\!\!\!\!\!=\tilde R_{12}^{-1}\\
R_{12}^{0}\ar[r]^{\psi^0} & \tilde R_{12}^{0}
}
\]
The morphism $R_{12}\to\tilde R_{12}$ defined by this diagram is a quasi-isomorphism by \eqref{e:injectivity on H_1}.

Now we have a commutative diagram of complexes of abelian groups 
\begin{equation}     \label{e:diagram to enhance}
\xymatrix{
R_{12}^{-1}\ar[r]^{\varphi^{-1}} \ar[d]_d &  \tilde R_{12}^{-1} \ar[r]^{\id}_{\sim}\ar[d]^{\tilde d}& \;\; L^{-1}\ar[d]^{d'}\\
R_{12}^{0}\ar[r]^{\psi^0} & \tilde R_{12}^{0}\ar[r]^{\varphi^0} &L^0
}
\end{equation}
and the problem is to define a ring structure on $\tilde R_{12}$ which makes \eqref{e:diagram to enhance} into a diagram of DG~rings. 
This problem has at most one solution because $ \tilde R_{12}^{0}=R_{12}^{0}+ \tilde d(\tilde R_{12}^{-1} )$ and from the right square of \eqref{e:diagram to enhance} we see that for
$x,y\in \tilde R_{12}^{-1}$ we must have
\begin{equation}
(\tilde d x)\cdot y=(d' x)\cdot y, \quad (\tilde d x)\cdot (\tilde d y)=\tilde d((\tilde d x)\cdot y)=\tilde d((d' x)\cdot y).
\end{equation}
It is straightforward to check that these formulas indeed define a solution to our problem. 
\end{proof}

\subsubsection{Admissibilization as localization}   \label{sss:Admissibilization as localization}
The functor $$\Adm :\Corr_\ana (R_1,R_2)\to\Corr_\adm (R_1,R_2)$$ identifies $\Corr_\adm (R_1,R_2)$ with the groupoid obtained from $\Corr_\ana (R_1,R_2)$ by inverting all morphisms. This follows from Lemma~\ref{l:Corr_adm is a groupoid} and the adjunction from Proposition~\ref{p:Adm}.

\bigskip

The admissibilization of a \emph{weakly admissible} correspondence is described in part~(iv) of the next lemma.

\begin{lem}   \label{l:weakly admissible}
Suppose that a correspondence 
\begin{equation}   \label{e:2correspondence in DGRings}
R_1\overset{f}\longleftarrow R_{12}\overset{g}\longrightarrow R_2
\end{equation}
is weakly admissible. Then

(i) $f$ is surjective;

(ii) $\Ker f$ is acyclic;

(iii) the DG ideal $I\subset R_{12}$ generated by $\Ker (R_{12}^{-1}\to R_{1}^{-1}\times R_{2}^{-1})$ is acyclic;

(iv) the correspondence
\[                                     
R_1\leftarrow R_{12}/I\to R_2
\]
is the admissibilization of \eqref{e:2correspondence in DGRings} \qed
\end{lem}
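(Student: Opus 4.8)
The plan is to prove the four assertions in the stated order, since (ii) feeds into (iii) and (iii) into (iv). Throughout I would use the incarnation of an object $R\in\DGRings^{0,-1}$ as a quasi-ideal $d\colon R^{-1}\to R^{0}$ --- so that $\xi\eta=0$, and hence $(d\xi)\cdot\eta=\xi\cdot(d\eta)$, for $\xi,\eta\in R^{-1}$, and $d$ vanishes on $R^{0}$ --- together with two standard homological facts: a surjective quasi-isomorphism of complexes has acyclic kernel, and the quotient map of a DG ring modulo an acyclic DG ideal is a quasi-isomorphism. For (i), surjectivity of $f$ in degree $-1$ is immediate, since by weak admissibility $R_{12}^{-1}\to R_{1}^{-1}\times R_{2}^{-1}$ is onto and one composes with the projection to $R_{1}^{-1}$; in degree $0$ one uses that $f$ is a quasi-isomorphism to get $R_{12}^{0}/d(R_{12}^{-1})\iso R_{1}^{0}/d(R_{1}^{-1})$ and combines this with $d(R_{1}^{-1})=f^{0}(d(R_{12}^{-1}))$ (a consequence of the degree $-1$ surjectivity just shown, together with $d\circ f^{-1}=f^{0}\circ d$) in a one-line diagram chase. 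Then (ii) is immediate: by (i) the sequence $0\to\Ker f\to R_{12}\xrightarrow{f}R_{1}\to 0$ is exact, and since $H^{\bullet}(f)$ is an isomorphism the long exact cohomology sequence forces $H^{\bullet}(\Ker f)=0$.

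For (iii) the first step is to make $I$ explicit. Put $K:=\Ker(R_{12}^{-1}\to R_{1}^{-1}\times R_{2}^{-1})$; as that map is $R_{12}^{0}$-linear, $K$ is an $R_{12}^{0}$-submodule of $R_{12}^{-1}$. Using $a\cdot dk=d(ak)\in d(K)$ for $a\in R_{12}^{0}$, $k\in K$, and $\xi\cdot dk=(d\xi)\cdot k\in R_{12}^{0}\cdot K=K$ for $\xi\in R_{12}^{-1}$, one checks that the DG ideal generated by $K$ has components $I^{-1}=K$ and $I^{0}=d(K)$. Consequently $H^{0}(I)=d(K)/d(K)=0$ tautologically, while $H^{-1}(I)=\Ker(d|_{K})$; and since $K\subseteq\Ker f^{-1}$ and $\Ker f$ is acyclic by (ii), the map $d$ is injective on $\Ker f^{-1}$ and a fortiori on $K$, so $H^{-1}(I)=0$. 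Hence $I$ is acyclic.

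For (iv) one first observes that $I\subseteq\Ker f\cap\Ker g$ (both $K$ and $d(K)$ vanish in $R_{1}$ and in $R_{2}$), so $f,g$ descend and one obtains a correspondence $R_{1}\leftarrow R_{12}/I\to R_{2}$ together with the quotient morphism $q\colon R_{12}\to R_{12}/I$ in $\Corr_{\ana}(R_{1},R_{2})$ (which is defined on the given correspondence because, by (i), that correspondence is an anamorphism). This new correspondence is admissible: $(R_{12}/I)^{-1}=R_{12}^{-1}/K\to R_{1}^{-1}\times R_{2}^{-1}$ is injective by the definition of $K$ and onto by weak admissibility, hence an isomorphism, and $R_{12}/I\to R_{1}$ is a quasi-isomorphism since $R_{12}\to R_{12}/I$ is one ($I$ acyclic by (iii)) and $R_{12}\to R_{1}$ is one. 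It remains to identify $q$ with the unit of the adjunction of Proposition~\ref{p:Adm}, which I would do by checking the universal property directly: any morphism in $\Corr_{\ana}(R_{1},R_{2})$ from $R_{12}$ to an admissible $R'$ is forced in degree $-1$ to be the composite $R_{12}^{-1}\to R_{1}^{-1}\times R_{2}^{-1}\iso (R')^{-1}$, hence kills $K$, hence (being a chain map) kills $d(K)$ as well, so it factors through $R_{12}/I$, uniquely because $q$ is surjective (one checks the factorization is again a morphism of correspondences). Alternatively: $q$ is inverted by the localization functor of \S\ref{sss:Admissibilization as localization}, and $R_{12}/I$ is already admissible, so $\Adm(R_{12})\cong R_{12}/I$ compatibly with the structure maps.

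The one step I expect to require genuine care is the acyclicity of $I$ in (iii), and within it the computation that the ideal generated by $d(K)$ in $R_{12}^{0}$ collapses to the additive subgroup $d(K)$ itself, so that $H^{0}(I)=0$: this is exactly where the quasi-ideal identities $a\cdot dk=d(ak)$ and $\xi\cdot dk=(d\xi)\cdot k$, i.e.\ the vanishing $\xi\eta=0$ on $R^{-1}$, are essential (cf.\ the remark at the end of \S\ref{ss:Eilenberg-Zilber}). Everything else is diagram chasing plus the two standard facts about quasi-isomorphisms recalled at the start.
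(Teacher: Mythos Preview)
The paper does not actually supply a proof of this lemma: it is stated with a terminal \qed, so the details are left to the reader. Your argument is correct and fills in precisely those details. The only point worth flagging is that your ``alternative'' route at the end of (iv) (via \S\ref{sss:Admissibilization as localization}) is a bit loose as written---one still needs to identify the unit map with $q$---but your primary argument via the universal property is clean and complete, and the key step you single out (that the DG ideal generated by $K$ is exactly $K\oplus d(K)$, using $a\cdot dk=d(ak)$ and $\xi\cdot dk=(d\xi)\cdot k$) is indeed where the quasi-ideal structure does the work.
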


\subsubsection{Admissibilization of the composition}   \label{sss:Adm of composition} 
Let $\alpha\in\Corr_\ana (R_1,R_2)$, $\beta\in\Corr_\ana (R_2,R_3)$, where $R_1,R_2,R_3\in \DGRings^{0,-1}$.
The canonical morphisms $\alpha\to\Adm (\alpha )$ and $\beta\to\Adm (\alpha )$ induce a morphism $\beta\circ\alpha\to\Adm (\beta )\circ\Adm (\alpha )$. The corresponding morphism
\[
\Adm (\beta\circ\alpha)\to\Adm (\Adm (\beta)\circ\Adm (\alpha ))
\]
is an isomorphism by Lemma~\ref{l:Corr_adm is a groupoid}. Thus $\Adm (\beta\circ\alpha)=\Adm (\Adm (\beta)\circ\Adm (\alpha ))$.

\subsubsection{The map $\adm:\Hom (R_1,R_2)\to\Corr_\adm (R_1,R_2)$}    \label{sss:Spl}
Define a map 
\begin{equation}   \label{e:Spl}
\adm:\Hom (R_1,R_2)\to\Corr_\adm (R_1,R_2)
\end{equation}
to be the composition
$\Hom (R_1,R_2)\to\Corr_\ana (R_1,R_2)\overset{\Adm}\longrightarrow\Corr_\adm (R_1,R_2)$, where the first map takes $\varphi\in\Hom (R_1,R_2)$ to the correspondence
\begin{equation}   \label{e:morphism as correspondence}
R_1\overset{\id}\longleftarrow R_1\overset{\varphi}\longrightarrow R_2. 
\end{equation}

\begin{lem}   \label{l:split butterflies}
(i) Let $\alpha\in\Corr_\adm (R_1,R_2)$ be an admissible correspondence 
\begin{equation}   \label{e:3correspondence in DGRings}
R_1\overset{f}\longleftarrow R_{12}\overset{g}\longrightarrow R_2
\end{equation}
Let $\adm^{-1}(\alpha )$ be the fiber of \eqref{e:Spl} over $\alpha$ (i.e., the set of isomorphism classes of pairs consisting of an element $\varphi\in\Hom (R_1,R_2)$ and an isomorphism $\adm (\varphi )\iso\alpha$). Then $\adm^{-1}(\alpha )$ canonically identifies with the set of splittings
\begin{equation} \label{e:Splittings}
\{ s:R_1\to R_{12}\,|\,  f\circ s=\id_{R_1} \},
\end{equation}
and after this identification  the map $\adm^{-1}(\alpha )\to\Hom (R_1,R_2)$ is given by $s\mapsto g\circ s$.

(ii) The  map from the set  \eqref{e:Splittings} to the set
\begin{equation} \label{e:splittings}
\{ \sigma:R_1^0\to R_{12}^0\,|\,  f^0\circ\sigma=\id_{R_1^0} \}
\end{equation}
given by $\sigma=s^0$ is bijective.
\end{lem}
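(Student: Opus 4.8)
The plan is to deduce part~(i) from the formal properties of admissibilization established in \S\ref{ss:Admissibilization}, and to prove part~(ii) by an explicit construction using the butterfly presentation of $\alpha$ from \S\ref{sss:butterflies}.

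For part~(i), recall that an element of $\adm^{-1}(\alpha)$ is (the class of) a pair $(\varphi,\iota)$, where $\varphi\in\Hom(R_1,R_2)$ and $\iota\colon\adm(\varphi)\iso\alpha$ is an isomorphism in $\Corr_\adm(R_1,R_2)$. For each $\varphi$, let $\beta_\varphi$ denote the correspondence \eqref{e:morphism as correspondence}, i.e., $R_1\overset{\id}\longleftarrow R_1\overset{\varphi}\longrightarrow R_2$; it is an anamorphism, and $\adm(\varphi)=\Adm(\beta_\varphi)$ by definition of $\adm$. Since $\Corr_\adm(R_1,R_2)$ is a groupoid (Lemma~\ref{l:Corr_adm is a groupoid}), the set of isomorphisms $\adm(\varphi)\iso\alpha$ coincides with $\Hom_{\Corr_\adm}(\Adm(\beta_\varphi),\alpha)$, which by the adjunction of Proposition~\ref{p:Adm} is canonically identified with $\Hom_{\Corr_\ana}(\beta_\varphi,\alpha)$. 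Unwinding the definition of a morphism of correspondences, a morphism $\beta_\varphi\to\alpha$ is precisely a morphism $s\colon R_1\to R_{12}$ in $\DGRings^{0,-1}$ satisfying $f\circ s=\id_{R_1}$ and $g\circ s=\varphi$. Letting $\varphi$ range over $\Hom(R_1,R_2)$ and observing that $\varphi$ is recovered from $s$ as $g\circ s$, we obtain a canonical bijection between $\adm^{-1}(\alpha)$ and the set of splittings \eqref{e:Splittings}, under which the projection $(\varphi,\iota)\mapsto\varphi$ becomes $s\mapsto g\circ s$.

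For part~(ii), fix a ring homomorphism $\sigma\colon R_1^0\to R_{12}^0$ with $f^0\circ\sigma=\id_{R_1^0}$; I must show there is a unique splitting $s\colon R_1\to R_{12}$ of $f$ with $s^0=\sigma$. Present $\alpha$ as a butterfly as in \S\ref{sss:butterflies}: identify $R_{12}^{-1}$ with $R_1^{-1}\times R_2^{-1}$ so that $f^{-1}$ is the first projection, write the differential of $R_{12}$ as $(h_1,h_2)$ with $h_i\colon R_i^{-1}\to R_{12}^0$, and recall that $f^0\circ h_1=d_{R_1}$, that $h_2$ is injective, and that $\im h_2=\Ker f^0$. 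The condition $f^{-1}\circ s^{-1}=\id$ forces $s^{-1}=(\id,\theta)$ for an additive map $\theta\colon R_1^{-1}\to R_2^{-1}$, and compatibility with differentials then forces $h_2(\theta(x))=\sigma(d_{R_1}x)-h_1(x)$ for all $x$. The right-hand side lies in $\Ker f^0=\im h_2$ (apply $f^0$), so $\theta$ exists and is uniquely determined, by injectivity of $h_2$. It remains to verify that $s:=(\sigma,s^{-1})$ is a morphism of DG rings: compatibility with the differentials holds by construction; the products $R_1^{-1}\cdot R_1^{-1}$ and $R_{12}^{-1}\cdot R_{12}^{-1}$ both vanish for degree reasons, so there is nothing to check in degree $-2$; and the semilinearity identity $s^{-1}(a x)=\sigma(a)\cdot s^{-1}(x)$ for $a\in R_1^0$ follows by applying the injective map $h_2$ to both sides and using that $\sigma$ is a ring homomorphism, that the $R_1^0$-action of $a$ on $R_1^{-1}$ agrees with the $R_{12}^0$-action of $\sigma(a)$ through $f^0$, and the $R_{12}^0$-linearity of $h_1$ and $h_2$.

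The only genuinely computational point is the final verification in part~(ii): after applying $h_2$ it reduces to the module-linearity axioms built into the butterfly, but the cancellations must be organized with some care. Everything in part~(i) is formal, resting only on the adjunction of Proposition~\ref{p:Adm} and the fact (Lemma~\ref{l:Corr_adm is a groupoid}) that $\Corr_\adm(R_1,R_2)$ is a groupoid.
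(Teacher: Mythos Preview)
Your proof is correct and follows the same approach as the paper: for (i) you use the adjunction of Proposition~\ref{p:Adm} together with Lemma~\ref{l:Corr_adm is a groupoid} exactly as the paper does, and for (ii) you spell out via the butterfly presentation what the paper compresses into the single remark that it ``easily follows from $f$ being a surjective quasi-isomorphism.'' One small expository point: in the last step of (ii), when you say you ``apply the injective map $h_2$ to both sides,'' note that both sides live in $R_{12}^{-1}=R_1^{-1}\times R_2^{-1}$, so what you mean is that the first components already agree and you apply $h_2$ to the second components; this is harmless but worth phrasing precisely.
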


The lemma implies that an admissible correspondence \eqref{e:3correspondence in DGRings} belongs to the essential image of \eqref{e:Spl} if and only if the homomorphism $f^0:R_{12}^0\to R_1^0$ admits a splitting. In terms of butterflies, this means that the NE-SW exact sequence in \eqref{e:butterfly} admits a splitting; in \S 4.5 of \cite{AN1} such butterflies are called \emph{splittable.}

\begin{proof}
Combining the definitions of $\adm$ and $\Adm$ with Lemma~\ref{l:Corr_adm is a groupoid}, we see that
an element of $\adm^{-1}(\alpha )$ is the same as an element $\varphi\in\Hom (R_1,R_2)$ plus a morphism from the correspondence
\eqref{e:morphism as correspondence} to the correspondence \eqref{e:3correspondence in DGRings}. Such a morphism is the same as a homomorphism $s:R_1\to R_{12}$ such that $g\circ s=\varphi$. This proves (i).

Statement (ii) easily follows from $f$ being a surjective quasi-isomorphism.
\end{proof}

\subsubsection{Explicit description of $\adm (\varphi )$}  \label{sss:adm (varphi) explicitly}
By Lemma~\ref{l:split butterflies}, the groupoid of admissible correspondences \eqref{e:3correspondence in DGRings} equipped with a splitting $\sigma :R_1^0\to R_{12}^0$ is a set, which identifies with $\Hom (R_1,R_2)$ as follows: given an admissible correspondences \eqref{e:3correspondence in DGRings} and a splitting $\sigma :R_1^0\to R_{12}^0$ one defines $\varphi\in\Hom (R_1,R_2)$ by $\varphi =g\circ s$, where $s:R_1\to R_{12}$ is the unique splitting of $f:R_{12}\to R_1$ with $s^0=\sigma$.

Let us describe the construction in the opposite direction in terms of butterflies. Given $\varphi\in\Hom (R_1,R_2)$, we have to construct a butterfly
\[
  \vcenter{
  \xymatrix@=1pc{
    R_1^{-1}\ar[dd]_d \ar[dr]^{h_1}  & &R_2^{-1} \ar[dl]_{h_2} \ar[dd]^d\\
    & R_{12}^0\ar[dl]_{f^0} \ar[dr]^{g^0} &  \\
    R_1^0 & & R_2^0
  }}
\]
equipped with a splitting $\sigma :R_1^0\to R_{12}^0$. One checks that the answer is as follows:
as an additive group,
\[
R_{12}^0=\{(x,y)\,|\, x\in R_1^0, y\in R_2^{-1}\},
\]
the multiplication operation  in $R_{12}^0$ is given by
\[
(x,y)\cdot (x',y')=(xx', \varphi^0 (x)y'+\varphi^0 (x')y+dy\cdot y'), 
\]
and the maps $h_1,h_2,f^0,g^0$ and $\sigma:R^0\to R_{12}^0$ are given by
\[
h_2(y)=(0,y), \quad f^0(x,y)=x, \quad \sigma (x)=(x,0),
\]
\[
g^0(x,y)=\varphi^0(x)+dy, \quad h_1(z)=(dz,-\varphi^{-1}(z)).
\]

\subsection{The $(2,1)$-category $\RGrpds_\AN$}     \label{ss:RGrpds_AN}
\subsubsection{Definition}    \label{sss:RGrpds_AN}
Following \cite{N0,AN1} and other works by Aldrovandi and Noohi, we define a $(2,1)$-category $\RGrpds_\AN$ as follows:

(a) its objects are those of $\DGRings^{0,-1}$;

(b) for $R_1,R_2\in\DGRings^{0,-1}$, the groupoid of 1-morphisms from $R_1$ to $R_2$ is $\Corr_\adm (R_1,R_2)$;

(c) the composition of 1-morphisms is the admissibilization of their composition as correspondences.

Good news: the composition of admissible correspondences is weakly admissible, so its admissibilization is as described in Lemma~\ref{l:weakly admissible}(iv). 
So the reader can easily describe composition of 1-morphisms in $\RGrpds_\AN$ using the language of butterflies from \S\ref{sss:butterflies} (on the other hand, the answer can be found in \cite[\S 10.1]{N0} or \cite[\S 5.1.1]{AN1}).

To check that $\RGrpds_\AN$ is indeed a 2-category, one has to prove the existence of the identity 1-morphisms. 
In fact, the identity endomorphism of the object of $\RGrpds_\AN$ corresponding to
$R\in\DGRings^{0,-1}$ is $\adm (\id_R)$, where $\adm :\Hom (R,R)\to\Corr_\adm (R,R)$ is as in \S\ref{sss:Spl}; this follows from \S\ref{sss:Adm of composition}. An explicit description of $\adm (\id_R)$ was given in \S\ref{sss:adm (varphi) explicitly}.

\subsubsection{Exercise} The 1-morphism given by an admissible correspondence $R_1\overset{f}\longleftarrow R_{12}\overset{g}\longrightarrow R_2$ is invertible if and only if $f$ is a quasi-isomorphism; in this case the inverse 1-morphism is the correspondence $R_2\overset{g}\longleftarrow R_{12}\overset{f}\longrightarrow R_1\,$.

\subsubsection{The functor $\DGRings^{0,-1}\to\RGrpds_\AN$}   \label{sss:DGRings to RGrpds_AN}
Recall that $\DGRings^{0,-1}$ and $\RGrpds_\AN$ have the same objects. We define a functor
\[
\DGRings^{0,-1}\to\RGrpds_\AN
\]
as follows: at the level of objects, it is the identity, and at the level of morphisms it is given by the map
 $\adm :\Hom (R_1,R_2)\to\Corr_\adm (R_1,R_2)$ from \S\ref{sss:Spl}. Compatibility with composition of morphisms follows from \S\ref{sss:Adm of composition}.

\subsection{The equivalence $\RGrpds_\AN\iso\RGrpds$}  \label{ss:RGrpds_AN to RGrpds}

\subsubsection{The 2-category $\RGrpds_\AN^\ana$}  \label{sss:RGrpds_AN^ana}
Define a $2$-category $\RGrpds_\AN^\ana$ as follows:

(a) its objects are those of $\DGRings^{0,-1}$;

(b) for $R_1,R_2\in\DGRings^{0,-1}$, the category of 1-morphisms from $R_1$ to $R_2$ is $\Corr_\ana (R_1,R_2)$;

(c) the composition of 1-morphisms is the usual composition of correspondences.

\subsubsection{The functor $\DGRings^{0,-1}\to\RGrpds$}  \label{sss:DGRings^{0,-1} to RGrpds}
Recall that $\RGrpds$ is the $(2,1)$-category of ring objects in the $(2,1)$-category of groupoids.
So we have a tautological functor $\RGrpds'_1\to\RGrpds$, 
where $\RGrpds'_1$ is the category of ring objects in the naive $1$-category of groupoids.
In \S\ref{ss:quasi-ideals}-\ref{ss:Q=RGrpds'} we constructed equivalences
$$\RGrpds'_1\iso\cQ\iso\DGRings^{0,-1}.$$
So we get a functor
\begin{equation}   \label{e:DGRings to RGrpds}
\DGRings^{0,-1}\to\RGrpds .
\end{equation}

\subsubsection{The functor $\RGrpds_\AN^\ana\to\RGrpds$}  
By \S\ref{sss:q-isomorphisms in Q}, the functor \eqref{e:DGRings to RGrpds} takes quasi-isomor\-phisms in $\DGRings^{0,-1}$ to equivalences between ring groupoids. 
So the functor \eqref{e:DGRings to RGrpds} canonically extends to a functor 
\begin{equation}   \label{e:RGrpds_AN^ana to RGrpds}
\RGrpds_\AN^\ana\to\RGrpds.
\end{equation}

\subsubsection{The functor $\RGrpds_\AN\to\RGrpds$}  \label{sss:RGrpds_AN to RGrpds}
We have a canonical functor 
\begin{equation}  \label{e:RGrpds_AN^ana to RGrpds_AN}
\RGrpds_\AN^\ana\to\RGrpds_\AN \, , 
\end{equation}
which acts as identity at the level of objects and as $\Adm :\Corr_\ana (R_1,R_2)\to\Corr_\adm (R_1,R_2)$
at the level of morphisms. By \S\ref{sss:Admissibilization as localization}, the functor \eqref{e:RGrpds_AN^ana to RGrpds_AN} has the following universal property:
for any $(2,1)$-category $\cC$, any functor $\RGrpds_\AN^\ana\to\cC$ uniquely factors through $\RGrpds_\AN\,$.
In particular, the functor \eqref{e:RGrpds_AN^ana to RGrpds} induces a functor
\begin{equation}   \label{e:RGrpds_AN to RGrpds}
\RGrpds_\AN\to\RGrpds.
\end{equation}

\begin{thm}  \label{t:main}
The functor \eqref{e:RGrpds_AN to RGrpds} is an equivalence.
\end{thm}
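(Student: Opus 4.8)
The plan is to prove Theorem~\ref{t:main} by exhibiting \eqref{e:RGrpds_AN to RGrpds} as essentially surjective and fully faithful, that is, an equivalence of $(2,1)$-categories in the sense that it induces bijections on equivalence classes of objects and equivalences on all Hom-groupoids. First I would dispose of essential surjectivity: every object of $\RGrpds_\AN$ is an object of $\DGRings^{0,-1}$, and by Proposition~\ref{p:essential surjectivity} together with the equivalences $\RGrpds'_1\iso\cQ\iso\DGRings^{0,-1}$ of \S\ref{ss:quasi-ideals}--\ref{ss:Q=RGrpds'}, the functor \eqref{e:DGRings to RGrpds} already hits every ring groupoid up to equivalence; composing with $\DGRings^{0,-1}\to\RGrpds_\AN$ shows the same for \eqref{e:RGrpds_AN to RGrpds}.

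The real content is full faithfulness, i.e.\ that for $R_1,R_2\in\DGRings^{0,-1}$ the functor
\[
\Corr_\adm (R_1,R_2)\longrightarrow\MMor_{\RGrpds}(\sR_1,\sR_2)
\]
is an equivalence of groupoids, where $\sR_i$ denotes the ring groupoid attached to $R_i$. Here I would use the concrete description of ring groupoids as quotient groupoids $\Cone(d)$ from \S\ref{sss:description of Cone (d)}. A $1$-morphism $\sR_1\to\sR_2$ of ring groupoids is, after choosing a presentation, the same datum as: a $\cQ$-morphism out of some resolution of $(C_1,I_1,d_1)$ landing in $(C_2,I_2,d_2)$ up to quasi-isomorphism — precisely an anamorphism in the sense of \S\ref{sss:Three classes}. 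More usefully, I would build the comparison in two stages through $\RGrpds_\AN^\ana$: first show that \eqref{e:RGrpds_AN^ana to RGrpds} identifies $\Corr_\ana(R_1,R_2)$, after inverting its morphisms, with the Hom-groupoid in $\RGrpds$; then invoke \S\ref{sss:Admissibilization as localization}, which says $\Adm$ exhibits $\Corr_\adm(R_1,R_2)$ as exactly that localization. Combining the two gives that \eqref{e:RGrpds_AN to RGrpds} is fully faithful. For the first stage, the key computation is that $\pi_0\Corr_\ana(R_1,R_2)$ (morphisms inverted) is in natural bijection with the set of $1$-morphisms $\sR_1\to\sR_2$ up to $2$-isomorphism, and that the automorphism group of a given anamorphism, after localization, matches $\Aut$ of the corresponding $1$-morphism; the second fact comes down to computing $H^0$ and $H^{-1}$ of an internal $\Hom$-complex $\HHom(R_1,R_2)$ and matching them with, respectively, $2$-automorphisms and the "extra" automorphisms visible in the butterfly picture, e.g.\ via the splittable-butterfly description in \S\ref{sss:adm (varphi) explicitly}.

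A cleaner organizational route, which I would actually follow, is to reduce everything to a universal-property statement. Both $\RGrpds_\AN$ and $\RGrpds$ receive a functor from $\DGRings^{0,-1}$ (the functors of \S\ref{sss:DGRings to RGrpds_AN} and \eqref{e:DGRings to RGrpds}), each sending quasi-isomorphisms to equivalences; I would show each of these functors is the \emph{initial} such functor, i.e.\ presents the target as the localization of $\DGRings^{0,-1}$ at quasi-isomorphisms in the $\infty$-categorical (or rather $(2,1)$-categorical) sense. For $\RGrpds$ this is essentially the statement that $1$-truncated animated rings are the localization of $1$-truncated simplicial rings at weak equivalences, combined with the identifications of \S\ref{s:the 1-category}. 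For $\RGrpds_\AN$ this is the substance of \S\ref{sss:Admissibilization as localization} promoted from Hom-groupoids to the whole $(2,1)$-category: the anamorphism $2$-category $\RGrpds_\AN^\ana$ is a "calculus of fractions" presentation, and $\RGrpds_\AN$ is its localization. Then \eqref{e:RGrpds_AN to RGrpds}, being a map between two localizations of the same category compatibly with the localization functors, is forced to be an equivalence.

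I expect the main obstacle to be the full-faithfulness computation on automorphism groups — concretely, checking that the localization $\Corr_\ana(R_1,R_2)[\text{morphisms}^{-1}]$, which already equals $\Corr_\adm(R_1,R_2)$ by \S\ref{sss:Admissibilization as localization}, has the right $\pi_1$, namely that $\Aut$ of a $1$-morphism in $\RGrpds_\AN$ coincides with $\Aut$ of the corresponding $1$-morphism of ring groupoids. In butterfly terms a $2$-automorphism of an admissible correspondence is an automorphism of the diagram \eqref{e:butterfly} fixing the four corners, and one must match these with natural transformations of the associated functors between quotient groupoids $\Cone(d_1)\to\Cone(d_2)$; the bookkeeping is where the quasi-ideal identity \eqref{e:quasi-ideal} and the goodness condition \eqref{e:KerKer=0} get used, and where I would expect to spend the most care. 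Everything else — essential surjectivity, compatibility with composition, the reduction through $\RGrpds_\AN^\ana$ — should be formal given the lemmas already proved in \S\ref{s:RGrpds_AN}.
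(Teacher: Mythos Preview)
Your essential surjectivity argument matches the paper's. For full faithfulness, however, the paper takes a more direct route than either of your proposals: rather than computing $\pi_0$ and $\pi_1$ separately or invoking a localization universal property, it shows that for each $\Phi\in\MMor_{\RGrpds}(\sR_1,\sR_2)$ the fiber $\Corr^\Phi_\ana(R_1,R_2)$ of the functor $\Corr_\ana(R_1,R_2)\to\MMor_{\RGrpds}(\sR_1,\sR_2)$ has a \emph{final object}. The construction is explicit, via the equivalence $\DGRings^{0,-1}\simeq\RGrpds'_3$: the final object is the correspondence whose middle term has degree-zero part $R_1^0\times_{\sR_2}R_2^0$, the fiber product being taken in the $(2,1)$-category $\RGrpds$. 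This final object is then shown to be admissible (either indirectly via $\Adm$, or directly via the criterion of Lemma~\ref{l:2admissibility criterion}). Since $\Corr^\Phi_\adm(R_1,R_2)$ is a groupoid containing a final object of the ambient category $\Corr^\Phi_\ana(R_1,R_2)$, it is contractible, and full faithfulness follows in one stroke with no separate automorphism-matching.

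Your route (b) --- identifying both sides as the $(2,1)$-localization of $\DGRings^{0,-1}$ at quasi-isomorphisms --- has a genuine gap on each side. For $\RGrpds$ you appeal to ``1-truncated animated rings are the localization of 1-truncated simplicial rings at weak equivalences,'' but this lies outside the paper's elementary framework and is not obviously the same statement anyway: you would still need to show that the inclusion $\RGrpds'_2\hookrightarrow\SRings$ induces an equivalence after localization, and then compare with $\DGRings^{0,-1}$. For $\RGrpds_\AN$, promoting \S\ref{sss:Admissibilization as localization} from a statement about individual Hom-groupoids to the assertion that $\RGrpds_\AN$ is \emph{the} localization of $\DGRings^{0,-1}$ requires a calculus-of-fractions verification you have not supplied; note also (\S\ref{sss:composition remarks}(i)) that correspondences whose left leg is merely a quasi-isomorphism are not closed under composition, so the restriction to \emph{surjective} quasi-isomorphisms is essential and complicates any comparison with the naive localization. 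Your route (a) could in principle be made to work, but the paper's fiber-product construction is shorter and sidesteps exactly the automorphism bookkeeping you anticipate as the main obstacle.
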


\subsubsection{Reducing Theorem~\ref{t:main} to Proposition~\ref{p:fiber has final object}}
Let $R_i\in\DGRings^{0,-1}$ and let $\sR_i\in~\RGrpds$ be the image of $R_i$ under the functor \eqref{e:DGRings to RGrpds}. The functor \eqref{e:RGrpds_AN^ana to RGrpds} induces functors
\begin{equation}   \label{e:Corr_ana to MMor}
\Corr_{\ana}(R_1,R_2)\to\MMor_{\RGrpds} (\sR_1,\sR_2),
\end{equation}
\begin{equation}   \label{e:Corr_adm to MMor}
\Corr_{\adm}(R_1,R_2)\to\MMor_{\RGrpds} (\sR_1,\sR_2),
\end{equation}
where $\MMor_{\RGrpds} (\sR_1,\sR_2)$ is the  groupoid of 1-morphisms $\sR_1\to\sR_2$.

By Proposition~\ref{p:essential surjectivity}, the functor \eqref{e:RGrpds_AN to RGrpds} is essentially surjective, so it remains to show that the functor \eqref{e:Corr_adm to MMor} is an equivalence. Since $\MMor_{\RGrpds} (\sR_1,\sR_2)$ is a groupoid, this follows from the next

\begin{prop}  \label{p:fiber has final object}
Let $\Phi\in\MMor_{\RGrpds} (\sR_1,\sR_2)$. Let $\Corr^\Phi_{\ana}(R_1,R_2)$ and $\Corr^\Phi_{\adm}(R_1,R_2)$ be the fibers of \eqref{e:Corr_ana to MMor} and
\eqref{e:Corr_adm to MMor} over $\Phi$. Then

(i) the category $\Corr^\Phi_{\ana}(R_1,R_2)$ has a final object;

(ii) the final object of $\Corr^\Phi_{\ana}(R_1,R_2)$ belongs to $\Corr^\Phi_{\adm}(R_1,R_2)$.
\end{prop}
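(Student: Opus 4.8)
The plan is to construct the final object of $\Corr^\Phi_{\ana}(R_1,R_2)$ explicitly and then to observe that it is admissible. Write $C_i:=R_i^0$, $I_i:=R_i^{-1}$, so that $\sR_i=\Cone(d_i)$ with $d_i\colon I_i\to C_i$ as in \S\ref{sss:description of Cone (d)}, and let $\iota_i\colon C_i\to\sR_i$ denote the canonical $1$-morphism — the image under \eqref{e:DGRings to RGrpds} of the morphism $(C_i,0,0)\to(C_i,I_i,d_i)$ in $\DGRings^{0,-1}$ — which is the inclusion of the discrete groupoid of objects and in particular is essentially surjective. Set
\[
A:=C_1\times_{\sR_2}C_2,
\]
the fiber product in $\RGrpds$ taken along $\Phi\circ\iota_1\colon C_1\to\sR_2$ and $\iota_2\colon C_2\to\sR_2$; by \S\ref{sss:fiber products} this $A$ is an ordinary ring. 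Let $f^0\colon A\to C_1$ and $g^0\colon A\to C_2$ be the two projections; $A$ will be the degree-$0$ part of the DG ring we build.

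Using the model $\sR_2=\Cone(d_2)$ from \S\ref{sss:description of Cone (d)}, a point of $A$ lying over $c_1\in C_1$ is a pair $(c_2,\gamma)$ with $\gamma$ an isomorphism $\Phi\iota_1(c_1)\iso c_2$ in $\sR_2$, and such $\gamma$ are parametrized bijectively by $y\in I_2$ (namely $\gamma=f_{\Phi\iota_1(c_1),\,y}$ and $c_2=\Phi\iota_1(c_1)+d_2y$). Since $\iota_2$ is essentially surjective, this shows that $f^0$ is surjective and that $\Ker f^0$ is canonically identified, as a non-unital ring (with the multiplication of \S\ref{sss:quasi-ideal rems}(ii)), with $I_2$; write $h_2\colon I_2\to A$ for this inclusion. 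Using that in $\sR_1$ the morphism $f_{0,x}$ is an isomorphism $\iota_1(0)\iso\iota_1(d_1x)$ and that $\Phi$ carries the additive unit of $\sR_1$ to that of $\sR_2$, one obtains a canonical additive, $A$-linear lift $h_1\colon I_1\to A$ of $d_1$ with $g^0\circ h_1=0$ (explicitly $h_1(x)=(d_1x,\,0,\,\gamma_x)$, with $\gamma_x$ built from $\Phi(f_{0,x})^{-1}$ and the unit coherence isomorphism of $\Phi$). Now put $R_{12}^{-1}:=I_1\times I_2$, with $A$ acting through $(f^0,g^0)$, and $d_{12}:=h_1+h_2\colon I_1\times I_2\to A$. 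A direct check — using that $(I_i,d_i)$ are quasi-ideals and $f^0,g^0$ are ring homomorphisms — shows $(R_{12}^{-1},d_{12})$ is a quasi-ideal in $A$, so $R_{12}:=(A,\,I_1\times I_2,\,d_{12})\in\DGRings^{0,-1}$.

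Take $f:=(f^0,\ppr_{I_1})\colon R_{12}\to R_1$ and $g:=(g^0,\ppr_{I_2})\colon R_{12}\to R_2$. Since $h_2$ identifies $I_2$ with $\Ker f^0$ and $h_1$ covers $d_1$, the map $f$ is a surjective quasi-isomorphism; and the natural map $R_{12}^{-1}\to R_1^{-1}\times R_2^{-1}$ is the identity of $I_1\times I_2$, so the correspondence $R_1\overset f\longleftarrow R_{12}\overset g\longrightarrow R_2$ is admissible. The ``universal'' isomorphism $\gamma$ built into $A=C_1\times_{\sR_2}C_2$ supplies the $2$-isomorphism identifying the $1$-morphism induced by this correspondence via \eqref{e:RGrpds_AN^ana to RGrpds} with $\Phi$, so we obtain an object $\tau\in\Corr^\Phi_{\adm}(R_1,R_2)\subset\Corr^\Phi_{\ana}(R_1,R_2)$. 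To see that $\tau$ is final in $\Corr^\Phi_{\ana}(R_1,R_2)$, let $(S,p,q,\theta')$ be another object, where $p\colon S\to R_1$ is a surjective quasi-isomorphism, $q\colon S\to R_2$, and $\theta'$ is the $2$-isomorphism with $\Phi$. Whiskering $\theta'$ with the canonical $1$-morphism from $S^0$ to the ring groupoid underlying $S$ gives a $2$-isomorphism between $\iota_2\circ q^0$ and $\Phi\circ\iota_1\circ p^0$; by the universal property of the (homotopy) fiber product $A$ this yields a unique ring homomorphism $\phi^0\colon S^0\to A$ over $C_1$ and $C_2$ and compatible with $\theta'$. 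Setting $\phi^{-1}:=(p^{-1},q^{-1})\colon S^{-1}\to I_1\times I_2$ — forced by $f\circ\phi=p$ and $g\circ\phi=q$ — one checks that $\phi=(\phi^0,\phi^{-1})$ is a morphism in $\DGRings^{0,-1}$, giving the (clearly unique) morphism $(S,p,q,\theta')\to\tau$ in $\Corr^\Phi_{\ana}(R_1,R_2)$. This proves (i), and since $\tau$ is admissible it proves (ii).

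The main obstacle I anticipate is bookkeeping rather than conceptual: to justify the concrete description of $A=C_1\times_{\sR_2}C_2$, the construction of $h_1$, and the identification of the induced $1$-morphism with $\Phi$, one must move fluently between ``$\Phi$ as a $1$-morphism of product-preserving functors $\Pol^{\op}\to\Grpds$'' and ``$\Phi$ as a functor of underlying groupoids together with coherence isomorphisms for $+$, $\cdot$, $0$, $1$'' — the forgetful functor of \S\ref{sss:RGrpds to Grpds} and the computation of fiber products of ordinary rings over a ring groupoid in \S\ref{sss:fiber products} are exactly the tools for this. Granting that dictionary, the remaining verifications (that $(R_{12}^{-1},d_{12})$ is a quasi-ideal, that $f$ is a surjective quasi-isomorphism, and that $\phi$ respects the DG ring structure) are routine diagram chases.
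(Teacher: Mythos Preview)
Your proof is correct and rests on the same construction as the paper's: the final object has degree-$0$ part $A=R_1^0\times_{\sR_2}R_2^0$, the fiber product in $\RGrpds$ along $\Phi\circ\pi_1$ and $\pi_2$. The difference is one of packaging. The paper invokes the equivalence $\DGRings^{0,-1}\simeq\RGrpds'_3$ from \S\ref{sss:RGrpds'_3} to identify $\Corr^\Phi_{\ana}(R_1,R_2)$ with the category of $2$-commutative diagrams $R_1^0\leftarrow R_{12}^0\to R_2^0$ over $\sR_2$, after which finality of the fiber product is immediate and no explicit description of $R_{12}^{-1}$ or of the maps $h_1,h_2$ is needed; you instead build the DG ring $(A,\,I_1\times I_2,\,h_1+h_2)$ by hand from the quasi-ideal description and verify the conditions directly. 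For (ii), the paper's main proof is indirect (the unit $X\to\Adm(X)$ is forced to be an isomorphism when $X$ is final), but a direct proof matching yours --- that the fiber-product object is visibly admissible --- is also given later in \S\ref{sss:direct proof} via Lemma~\ref{l:two notions of admissibility}. Your explicit route makes the admissibility of the final object transparent at the cost of the bookkeeping you flag; the paper's route hides that bookkeeping inside the equivalence $\DGRings^{0,-1}\simeq\RGrpds'_3$.
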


The proof given below uses the fiber product in the $(2,1)$-category of ring groupoids, see~\S\ref{sss:fiber products}.

\begin{proof}
Statement (ii) follows from (i): if $X\in\Corr_{\ana}(R_1,R_2)$ is the image of a final object of $\Corr^\Phi_{\ana}(R_1,R_2)$ then the morphism $X\to\Adm (X)$ is an isomorphism, so $X$ is admissible.

To prove (i), we will use the equivalence
\begin{equation}   \label{e:DGRings=RGrpds'_3}
\DGRings^{0,-1}\iso\RGrpds'_3 \, ,
\end{equation}
from \S\ref{s:the 1-category}, where $\RGrpds'_3$ is as in \S\ref{sss:RGrpds'_3}. In particular, we think of $R_n\in\DGRings^{0,-1}$ as a pair 
$(\sR_n,\pi_n:R_n^0\epi\sR_n)\in \RGrpds'_3$. The equivalence \eqref{e:DGRings=RGrpds'_3} identifies $\Corr^\Phi_{\ana}(R_1,R_2)$ with the category of 2-commutative diagrams
\[
\xymatrix{
R_1^0 \ar[rd]_{\Phi\circ\pi_1}& R_{12}^0\ar@{->>}[l]\ar[r]&R_2^0\ar@{->>}[ld]^{\pi_2}  \\
& \sR_2& 
}
\]
So  $\Corr^\Phi_{\ana}(R_1,R_2)$ has a final object: it corresponds to the diagram
\begin{equation}    \label{e:2-categoical fiber product}
\xymatrix{
R_1^0 \ar[rd]_{\Phi\circ\pi_1}& R_1^0\times_{\sR_2}R_2^0\ar@{->>}[l]\ar[r]&R_2^0\ar@{->>}[ld]^{\pi_2} \\
& \sR_2& 
}
\end{equation}
(the map $R_1^0\times_{\sR_2}R_2^0\to R_1^0$ is surjective because $\pi_2:R_2^0\to\sR_2$ is essentially surjective).
\end{proof}

The above proof of Proposition~\ref{p:fiber has final object}(ii) was somewhat indirect (we used the functor $\Adm$ and the morphism $\Id\to\Adm$). In \S\ref{sss:direct proof} we will give a direct proof of this fact. But first we have to discuss the notion of admissible correspondence in a more abstract context.

\subsection{Admissible correspondences between categories}  \label{ss:admissibility for categories}
\subsubsection{Notation}  \label{sss:Cats' notation}
Let $\Cats$ be the 2-category of essentially small categories.  Let $\Cats'$ be the most naive 1-category of small categories (its morphisms are functors on the nose). 
Recall that if $\cC_1,\cC_2$ are categories we write $\Funct (\cC_1,\cC_2)$ for the category of functors $\cC_1\to\cC_2$.

Given $\cC_1,\cC_2\in\Cats'$, let $\Corr (\cC_1,\cC_2)$ be the category of correspondences from $\cC_1$ to $\cC_2$ in $\Cats'$ (see \S\ref{ss:correspondences}(ii)).
Thus objects of $\Corr (\cC_1,\cC_2)$ are diagrams
\begin{equation}   \label{e:3correspondence in Cats'}
\cC_1\overset{F}\longleftarrow \cC_{12}\overset{G}\longrightarrow \cC_2
\end{equation}
in $\Cats'$. 

\subsubsection{The graph of a functor}   \label{sss:2graph of functor}
By the \emph{graph} of a functor $\Phi:\cC_1\to\cC_2$ we mean the category of triples $(c_1,c_2,\psi)$, where $c_i\in\cC_i$ and $\psi$ is an isomorphism
$\Phi (c_1)\iso c_2$. We denote this category by $\Graph_\Phi$. The correspondence $\cC_1\leftarrow \Graph_\Phi\to\cC_2$ will be denoted by 
$\Graph (\Phi )$. Thus we get a functor
\begin{equation}  \label{e:2Graph}
\Graph :\Funct (\cC_1,\cC_2)\to\Corr (\cC_1,\cC_2), \, \quad \Phi\mapsto \Graph (\Phi ).
\end{equation}

\begin{lem}   \label{l:Graph is fully faithful}
The functor \eqref{e:2Graph} is fully faithful.  \qed
\end{lem}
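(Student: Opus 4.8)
The plan is to unwind the definitions until the statement becomes a concrete assertion about functors between ordinary categories. Given a morphism $\Psi$ in $\Funct(\cC_1,\cC_2)$, i.e.\ a natural transformation $\Psi:\Phi\Rightarrow\Phi'$ which is automatically an isomorphism if we are in $\Cats$ (but $\Cats$ is a $2$-category, so here I should read the target of \eqref{e:2Graph} as the $2$-category of correspondences and check the claim $2$-categorically; for a $(2,1)$-statement I will instead verify that $\Graph$ induces an equivalence $\Funct(\cC_1,\cC_2)\to\Corr(\cC_1,\cC_2)$ onto the full subcategory spanned by graphs). Concretely I must produce, for any two functors $\Phi,\Phi':\cC_1\to\cC_2$, a bijection between natural transformations $\Phi\Rightarrow\Phi'$ and morphisms of correspondences $\Graph(\Phi)\to\Graph(\Phi')$, and check this bijection is compatible with composition and sends identities to identities.

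First I would spell out what a morphism of correspondences $\Graph(\Phi)\to\Graph(\Phi')$ is: by the definition recalled in \S\ref{ss:correspondences}(ii), it is a functor $H:\Graph_\Phi\to\Graph_{\Phi'}$ commuting with the two legs to $\cC_1$ and $\cC_2$. Since the leg to $\cC_1\times\cC_2$ (pairing the two legs) is faithful on both $\Graph_\Phi$ and $\Graph_{\Phi'}$ — an object of $\Graph_\Phi$ is a triple $(c_1,c_2,\psi)$ with $\psi$ an \emph{isomorphism}, and a morphism is a pair of morphisms in $\cC_1,\cC_2$ compatible with the $\psi$'s — the functor $H$ is determined by where it sends objects, and on the object $(c_1,\Phi(c_1),\id)$ it must send it to some $(c_1,\Phi'(c_1),\te_{c_1})$ for a uniquely determined isomorphism $\te_{c_1}:\Phi'(c_1)\iso\Phi(c_1)$ (here I use that $H$ fixes the $\cC_1$-leg, so the first coordinate is $c_1$, and fixes the $\cC_2$-leg, so the second coordinate is $\Phi'(c_1)$... wait: the $\cC_2$-leg of $\Graph_\Phi$ sends $(c_1,c_2,\psi)$ to $c_2$, which on $(c_1,\Phi(c_1),\id)$ is $\Phi(c_1)$, so $H$ must land in triples with second coordinate $\Phi(c_1)$, giving $(c_1,\Phi(c_1),\te_{c_1})$ with $\te_{c_1}:\Phi'(c_1)\iso\Phi(c_1)$). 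Naturality of $\te$ in $c_1$ is forced by $H$ being a functor. Thus $H\mapsto(\te_{c_1})_{c_1}$ gives a map from $\Corr$-morphisms to natural isomorphisms $\Phi'\Rightarrow\Phi$, hence to natural transformations $\Phi\Rightarrow\Phi'$; conversely a natural transformation $\Psi:\Phi\Rightarrow\Phi'$ (necessarily iso in $\Cats$) produces such an $H$ by $(c_1,c_2,\psi)\mapsto(c_1,c_2,\psi\circ\Psi_{c_1}^{-1})$. I would then check these two assignments are mutually inverse — a short diagram chase — and that they respect composition and identities, which is routine.

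For full \emph{faithfulness} in the $2$-categorical sense I also need the statement on $2$-morphisms: but $\Corr(\cC_1,\cC_2)$ is an ordinary category (its morphisms are functors $H$ between the middle terms, with no higher structure once we fix the legs), and likewise for the relevant mapping category on the source side once we restrict to natural \emph{iso}morphisms; so there is nothing further to check beyond the object/morphism bijection just described, and the essential surjectivity part of ``fully faithful'' is not being claimed — only fullness and faithfulness of the functor \eqref{e:2Graph}.

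The only mild subtlety — and the step I'd flag as the ``main obstacle,'' though it is really a bookkeeping point — is making sure the $\cC_2$-leg genuinely pins down the second coordinate of $H(c_1,\Phi(c_1),\id)$ to be $\Phi(c_1)$ rather than merely isomorphic to it, which is exactly the content of $H$ being a morphism of correspondences \emph{on the nose} in $\Cats'$ (this is why the lemma is stated for $\Cats'$-correspondences); once that is granted, the bijection and its naturality are immediate, so I would present the proof as: unwind the definition of a $\Corr$-morphism, observe both legs are jointly faithful, deduce $H$ is encoded by a natural family of isomorphisms, exhibit the inverse construction from a natural transformation, and note compatibility with composition. I expect the write-up to be under a page.
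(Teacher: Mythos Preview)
The paper gives no proof: the statement is followed immediately by a tombstone, so the author treats it as routine. Your unwinding of the definitions is the natural one and is correct in substance.

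One point does need cleaning up. You write that a natural transformation $\Psi:\Phi\Rightarrow\Phi'$ is ``necessarily iso in $\Cats$'' and then use $\Psi_{c_1}^{-1}$ in the inverse construction. That parenthetical is false for arbitrary categories: $\Funct(\cC_1,\cC_2)$, read literally, has \emph{all} natural transformations as morphisms, not only invertible ones. What your argument actually establishes is a bijection between $\Hom_{\Corr}(\Graph(\Phi),\Graph(\Phi'))$ and the set of natural \emph{isomorphisms} $\Phi\iso\Phi'$. Indeed there is no way to extend $\Graph$ over a non-invertible $\Psi$: given $(c_1,c_2,\psi)\in\Graph_\Phi$ with $\psi:\Phi(c_1)\iso c_2$, one cannot manufacture an isomorphism $\Phi'(c_1)\iso c_2$ from a non-invertible $\Psi_{c_1}$ while keeping both legs strict. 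So the lemma is only sensible, and your proof only complete, when $\Funct(\cC_1,\cC_2)$ is read as the groupoid of functors and natural isomorphisms. In the paper's intended applications the categories involved are groupoids and the issue evaporates; in your write-up you should replace the incorrect parenthetical with the observation that $\Graph$ is only functorial on natural isomorphisms, or else restrict to that case explicitly.

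Separately, the exposition should be tightened: the mid-argument ``wait:'' and the back-and-forth over which leg pins down which coordinate belong in scratch work, not in the final text.
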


\subsubsection{Admissible correspondences}   \label{sss:Admissible correspondences}
Let $\Corr_\adm (\cC_1,\cC_2)$ denote the essential image of \eqref{e:2Graph}. We say that a correspondence \eqref{e:3correspondence in Cats'} is \emph{admissible\footnote{Our terminology is not standard. In \cite{Mak} admissible correspondences are called \emph{saturated anafunctors.}}} if it belongs to
$\Corr_\adm (\cC_1,\cC_2)$. In this case $F:\cC_{12}\to\cC_1$ is a strictly surjective equivalence.

\begin{lem}   \label{l:2admissibility criterion}
Let
\begin{equation}   \label{e:4correspondence in Cats'}
\cC_1\overset{F}\longleftarrow \cC_{12}\overset{G}\longrightarrow \cC_2
\end{equation}
be a correspondence in $\Cats'$ such that $F$ is an equivalence. Then the following are equivalent:

(a) the correspondence \eqref{e:4correspondence in Cats'} is admissible;

(b) the canonical map 
\begin{equation}  \label{e:map required to be bijective}
\Ob\cC_{12}\to\Ob\cC_1\times_{\cC_2}\Ob\cC_2
\end{equation}
 is bijective; here 
\[
\Ob\cC_1\times_{\cC_2}\Ob\cC_2:=\{ (c_1,c_2,\psi )\,|\, c_i\in\Ob\cC_i, \;\; \psi: GF^{-1}(c_1)\iso c_2 \}
\]
is the fiber product in the 2-category $\Cats$, 
and the map \eqref{e:map required to be bijective} takes $c\in\cC_{12}$ to  the triple $(c_1,c_2,\psi )$, where $c_1=F(c)$, $c_2=G(c)$, and 
$\psi :GF^{-1}F(c)\iso G(c)$ comes from the isomor\-phism~$F^{-1}F(c)\iso c$;

c) the functor
\[
H:\cC_{12}\to\cC_1\times\cC_2, \quad H:=(F,G)
\]
has the following property: for every $c\in\cC_{12}$ every $(\cC_1\times\cC_2)$-isomorphism with source $H(c)$ has one and only lift to a $\cC_{12}$-isomorphism with source $c$.   \qed
\end{lem}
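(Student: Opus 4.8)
The statement to prove is Lemma~\ref{l:2admissibility criterion}, the equivalence of conditions (a), (b), (c) for a correspondence $\cC_1\overset{F}\longleftarrow\cC_{12}\overset{G}\longrightarrow\cC_2$ in $\Cats'$ with $F$ an equivalence. The plan is to prove the cyclic chain of implications (a)$\Rightarrow$(b)$\Rightarrow$(c)$\Rightarrow$(a), or rather to prove (a)$\Leftrightarrow$(b) and (b)$\Leftrightarrow$(c) separately, since each pair is short.

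First I would handle (a)$\Rightarrow$(b). If the correspondence is admissible, by definition (\S\ref{sss:Admissible correspondences}) it lies in the essential image of $\Graph$, so there is a functor $\Phi:\cC_1\to\cC_2$ and an isomorphism of correspondences between \eqref{e:4correspondence in Cats'} and $\Graph(\Phi)$. It then suffices to check (b) for $\Graph(\Phi)$ itself, where $\cC_{12}=\Graph_\Phi$ is the category of triples $(c_1,c_2,\psi)$ with $\psi:\Phi(c_1)\iso c_2$. One computes directly that $F^{-1}$ applied to $c_1$ is (canonically isomorphic to) $(c_1,\Phi(c_1),\id)$, so $GF^{-1}(c_1)\cong\Phi(c_1)$, and the map \eqref{e:map required to be bijective} sends $(c_1,c_2,\psi)$ to essentially itself; bijectivity is then tautological. (One small subtlety: one must check the map \eqref{e:map required to be bijective} is independent, up to the ambient identifications, of the choice of quasi-inverse $F^{-1}$ — this follows from $F$ being an equivalence and fully faithfulness of $\Graph$ via Lemma~\ref{l:Graph is fully faithful}.)

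Next, (b)$\Rightarrow$(a). Assume \eqref{e:map required to be bijective} is bijective. I would construct the functor $\Phi:\cC_1\to\cC_2$ as $\Phi:=G\circ F^{-1}$ (for a fixed choice of quasi-inverse $F^{-1}$, together with the natural isomorphism $FF^{-1}\iso\id$), and then build an isomorphism of correspondences $\eqref{e:4correspondence in Cats'}\cong\Graph(\Phi)$. There is a canonical functor $\cC_{12}\to\Graph_\Phi$ sending $c$ to $(F(c),G(c),\psi_c)$ where $\psi_c:GF^{-1}F(c)\iso G(c)$ is induced by the unit; this is exactly the map from (b) on objects, hence a bijection on objects, and it commutes with the projections to $\cC_1$ and $\cC_2$ on the nose. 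To see it is an isomorphism of categories, it remains to check it is a bijection on morphisms; but a morphism in $\cC_{12}$ is determined by its images in $\cC_1$ and $\cC_2$ (because $F$ is faithful and these two images agree via $\psi$ — more precisely, $(F,G)$ is faithful since $F$ is), and every morphism of $\Graph_\Phi$ with given $\cC_1$- and $\cC_2$-components lifts (again using that $F$ is full, so one gets a $\cC_{12}$-morphism with the right $F$-image, and faithfulness of $G$ on the relevant hom-set pins down the $G$-component). This is the step I expect to be the most delicate bookkeeping: one must be careful that fullness+faithfulness of $F$ together with the object bijection \eqref{e:map required to be bijective} really do force $(F,G)$ to be an isomorphism onto $\Graph_\Phi$, and not merely an equivalence.

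Finally, (b)$\Leftrightarrow$(c). Condition (c) says that the functor $H=(F,G):\cC_{12}\to\cC_1\times\cC_2$ has the unique-isomorphism-lifting property for isomorphisms with prescribed source. The cleanest route: since $F$ is an equivalence, the projection $\cC_1\times\cC_2\to\cC_2$ precomposed with the "graph" inclusion and the identification show that $H$ factors as $\cC_{12}\to \cC_1\times_{\cC_2}\cC_2 \to \cC_1\times\cC_2$ where the first map is \eqref{e:map required to be bijective} on objects and the second is the inclusion of the $2$-fiber-product (an "isofibration-like" functor). One checks the second functor automatically has the unique isomorphism-lifting property — this is a general fact about the strict fiber product $\Ob\cC_1\times_{\cC_2}\Ob\cC_2$ sitting over $\cC_1\times\cC_2$, using that $F$ is an equivalence so $GF^{-1}$ is well-behaved. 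Hence $H$ has the lifting property of (c) if and only if $\cC_{12}\to\cC_1\times_{\cC_2}\cC_2$ does; and since $F$ being an equivalence already makes this last functor fully faithful, it has the unique isomorphism-lifting property if and only if it is bijective on objects, i.e.\ if and only if (b) holds. Assembling (a)$\Leftrightarrow$(b) with (b)$\Leftrightarrow$(c) completes the proof. The last sentence of the statement, that admissibility forces $F$ to be a strictly surjective equivalence, is immediate from the description of $\Graph_\Phi$ (the projection to $\cC_1$ is literally surjective on objects and an equivalence), so no extra work is needed there.
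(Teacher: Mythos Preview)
The paper gives no proof of this lemma at all---it is stated with a bare \qed\ and left to the reader. Your argument is correct and fills in the omitted details along the most natural lines.

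A couple of small remarks. Your core observation for (a)$\Leftrightarrow$(b), namely that the functor $\cC_{12}\to\Graph_{GF^{-1}}$ given by $c\mapsto(F(c),G(c),\psi_c)$ is always fully faithful (since $F$ is an equivalence) and hence is an isomorphism of categories iff it is bijective on objects, is exactly what the paper records later as Lemma~\ref{l:adjunction}: there this map is identified as the unit of the adjunction between $\Graph$ and the functor $\Corr_{\eq}\to\Funct$ of \S\ref{sss:the adjoint pair}(ii), and admissibility is then equivalent to the unit being an isomorphism. So your (a)$\Leftrightarrow$(b) is the same content as that later lemma, just unpacked. For (b)$\Leftrightarrow$(c), your factorization $H:\cC_{12}\to\Graph_{GF^{-1}}\to\cC_1\times\cC_2$, together with the observation that the second arrow always has the unique isomorphism-lifting property while the first is fully faithful, is clean and correct; the surjectivity-on-objects half of (c)$\Rightarrow$(b) uses essential surjectivity of $F$ to produce a source for the lifting, which you should perhaps make explicit.

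One minor slip: the ``last sentence'' you mention (that admissibility forces $F$ to be a strictly surjective equivalence) is not part of the lemma---it is a separate remark in \S\ref{sss:Admissible correspondences}, so there is nothing to add to your proof on that score.
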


\subsection{Comparing the two notions of admissibility}  \label{ss:two notions of admissibility}
Let 
\begin{equation}   \label{e:correspondence in RGrpds'}
\sR_1\leftarrow \sR_{12}\rightarrow \sR_2
\end{equation}
be a correspondence in $\RGrpds'_1$ (i.e., in the 1-category of ring groupoids). It is said to be admissible if its image under the equivalence $\RGrpds'_1\iso\DGRings^{0,-1}$ is an admissible correspondence in $\DGRings^{0,-1}$ (as defined in \S\ref{sss:Three classes}).

On the other hand, applying to \eqref{e:correspondence in RGrpds'} the functor $\RGrpds'_1\to\Grpds'_1\subset\Cats'$ from \S\ref{ss:RGrpds'}, we get a correspondence in $\Cats'$. For such correspondences we have the notion of admissibility from \S\ref{sss:Admissible correspondences}.

\begin{lem}    \label{l:two notions of admissibility}
A correspondence $\sR_1\leftarrow \sR_{12}\rightarrow \sR_2$ in $\RGrpds'_1$ is admissible if and only if its image in $\Cats'$ is an admissible correspondence  in the sense of \S\ref{sss:Admissible correspondences}.
\end{lem}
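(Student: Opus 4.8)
The plan is to unwind both notions of admissibility down to an assertion about the underlying sets and maps, and then check they coincide termwise. First I would fix a correspondence $\sR_1\leftarrow \sR_{12}\rightarrow \sR_2$ in $\RGrpds'_1$ and transport it through the equivalence $\RGrpds'_1\iso\cQ\iso\DGRings^{0,-1}$ of \S\ref{ss:quasi-ideals}--\ref{ss:Q=RGrpds'}; write $R_i=(R_i^0,R_i^{-1},d_i)$ for the resulting objects of $\DGRings^{0,-1}$. Recall from \S\ref{sss:description of Cone (d)} that the underlying groupoid $\Cone(d_i)$ has object set $R_i^0$ and morphism set labeled by $R_i^0\times R_i^{-1}$, with $f_{c,x}:c\to c+d_i x$. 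So already on the level of $\Cats'$ the correspondence is a diagram of the groupoids $\Cone(d_1)\leftarrow \Cone(d_{12})\rightarrow \Cone(d_2)$, and the data are all explicit in terms of the DG rings.

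Next I would compare the hypotheses. The DG-ring notion of admissibility (\S\ref{sss:Three classes}) says $F$ is a quasi-isomorphism and $R_{12}^{-1}\to R_1^{-1}\times R_2^{-1}$ is an isomorphism. The categorical notion (\S\ref{sss:Admissible correspondences}, with the criterion of \lemref{l:2admissibility criterion}) says $F$ is an equivalence of groupoids and the canonical map $\Ob\cC_{12}\to\Ob\cC_1\times_{\cC_2}\Ob\cC_2$ is bijective. I would first observe, using \S\ref{sss:q-isomorphisms in Q}, that $F$ being a quasi-isomorphism of DG rings is \emph{equivalent} to the induced functor $\Cone(d_{12})\to\Cone(d_1)$ being an equivalence of groupoids; this handles the common hypothesis ``$F$ is an equivalence'' on both sides and lets me invoke \lemref{l:2admissibility criterion} freely. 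It then remains, assuming $F$ is an equivalence, to show: the map $R_{12}^{-1}\to R_1^{-1}\times R_2^{-1}$ is an isomorphism $\iff$ the map \eqref{e:map required to be bijective} $\Ob\Cone(d_{12})\to \Ob\Cone(d_1)\times_{\Cone(d_2)}\Ob\Cone(d_2)$ is bijective.

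For that equivalence I would compute both sides explicitly. The target of \eqref{e:map required to be bijective} is $\{(c_1,c_2,\psi)\mid c_i\in R_i^0,\ \psi\colon GF^{-1}(c_1)\iso c_2\}$; since $\psi$ is a morphism in $\Cone(d_2)$, it is $f_{GF^{-1}(c_1),\,y}$ for a unique $y\in R_2^{-1}$, and $c_2=GF^{-1}(c_1)+d_2 y$ is then forced. Hence the target is in natural bijection with $R_1^0\times R_2^{-1}$ (via $(c_1,y)$), and the map from $\Ob\Cone(d_{12})=R_{12}^0$ sends $c$ to $\big(F^0(c),\,$the $R_2^{-1}$-component of the comparison $2$-cell$\big)$. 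Unwinding the Grothendieck-construction description of $GF^{-1}$ and the fact that $F^{-1}F(c)\iso c$ is the canonical $2$-cell, this component is computed from $G^{-1}(c)\in R_{12}^{-1}$ together with $F^{-1}$; the upshot is that bijectivity of \eqref{e:map required to be bijective} for all such correspondences with $F$ an equivalence is controlled precisely by whether $R_{12}^{-1}\to R_1^{-1}\times R_2^{-1}$ is bijective. I would make this last identification clean by using the explicit butterfly picture of \S\ref{sss:butterflies}: when $F$ is a quasi-isomorphism, writing $d_{12}$ as a pair $(h_1,h_2)$ and tracking object/morphism sets shows the comparison map \eqref{e:map required to be bijective} is literally $R_{12}^{-1}\to R_1^{-1}\times R_2^{-1}$ after the above parametrizations.

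The main obstacle I anticipate is purely bookkeeping: carefully matching the $2$-categorical fiber product $\Ob\cC_1\times_{\cC_2}\Ob\cC_2$ (which involves the weak inverse $F^{-1}$ and a chosen $2$-cell) with the strict algebra of the DG rings, i.e.\ making sure that the identification of the target with $R_1^0\times R_2^{-1}$ is canonical and that the resulting map really is the DG-module map $R_{12}^{-1}\to R_1^{-1}\times R_2^{-1}$ rather than merely something with the same bijectivity behavior. One clean way to sidestep the choice of $F^{-1}$ is to invoke part (c) of \lemref{l:2admissibility criterion} instead of part (b): admissibility in $\Cats'$ is equivalent to the functor $H=(F,G)\colon \Cone(d_{12})\to \Cone(d_1)\times\Cone(d_2)$ having the unique-isomorphism-lifting property, and this property, expressed on morphism sets $R_{12}^0\times R_{12}^{-1}\to (R_1^0\times R_1^{-1})\times(R_2^0\times R_2^{-1})$, translates directly (given that $F$ is an equivalence) into bijectivity of $R_{12}^{-1}\to R_1^{-1}\times R_2^{-1}$. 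I would write the proof using (c) for this reason, relegating the (b)-based picture to a remark if at all.
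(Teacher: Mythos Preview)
Your proposal is correct and lands on exactly the approach the paper takes: the paper's one-line proof says ``Use the equivalence (a)$\Leftrightarrow$(c) from \lemref{l:2admissibility criterion} (combined with \S\ref{sss:description of Cone (d)}--\S\ref{sss:q-isomorphisms in Q}),'' which is precisely your final plan of matching ``$F$ equivalence $\Leftrightarrow f$ quasi-isomorphism'' via \S\ref{sss:q-isomorphisms in Q} and then reading criterion~(c) on morphism sets as bijectivity of $R_{12}^{-1}\to R_1^{-1}\times R_2^{-1}$. Your instinct to abandon the (b)-route in favor of~(c) is well-founded and matches the paper; you can safely drop the (b)-based discussion entirely.
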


\begin{proof}
Use the equivalence (a)$\Leftrightarrow$(c) from Lemma~\ref{l:2admissibility criterion} (combined with \S\ref{sss:description of Cone (d)}-\ref{sss:q-isomorphisms in Q}). 
\end{proof}

\subsubsection{A direct proof of Proposition~\ref{p:fiber has final object}(ii)}   \label{sss:direct proof} 
We have to show that the correspondence in $\DGRings^{0,-1}$ (or equivalently, in $\RGrpds'_1$) corresponding to diagram \eqref{e:2-categoical fiber product} is admissible. This follows from Lemma~\ref{l:two notions of admissibility} and the equivalence (a)$\Leftrightarrow$(c) from Lemma~\ref{l:2admissibility criterion}. \qed

\subsubsection{On the functor $\MMor_{\RGrpds} (\sR_1,\sR_2)\iso\Corr_{\adm}(R_1,R_2)$}
Let $\sR_1,\sR\in~\RGrpds$ correspond to $R_1,R_2\in\DGRings^{0,-1}$. In the proof of Proposition~\ref{p:fiber has final object} we constructed the functor $\MMor_{\RGrpds} (\sR_1,\sR_2)\to\Corr_{\adm}(R_1,R_2)$ inverse to  \eqref{e:Corr_adm to MMor}. The same functor can be described as follows.

First of all, $\Corr_{\adm}(R_1,R_2)=\Corr_{\adm}(\sR'_1,\sR'_2)$, where $\sR'_i\in\RGrpds'_1$ is the image of $R_i$ under the equivalence
$\DGRings^{0,-1}\iso\RGrpds'_1$. To construct the functor $$\MMor_{\RGrpds} (\sR_1,\sR_2)\to\Corr_{\adm}(\sR'_1,\sR'_2),$$ recall that
by definition, $\sR'_i$ is a functor $F_i:\Pol^{\op}\to\Grpds'_1$. A 1-morphism $\sR_1\to\sR_2$ defines for each $A\in\Pol^{\op}$ a functor from the groupoid  $F_1(A)$ to the groupoid $F_2(A)$. Its graph (in the sense of \S\ref{sss:2graph of functor}) is a correspondence $F_1(A)\leftarrow F_{12}(A)\to F_2(A)$. Note that $F_{12}(A)$ is defined
up to unique isomorphism, i.e., as an object of $\Grpds'_1$. The assignment $A\mapsto F_{12}(A)$ is  a functor $$\Pol^{\op}\to\Grpds'_1$$ commuting with products, i.e., an object of $\RGrpds'_1$. Thus we get an object of $\Corr (\sR'_1,\sR'_2)$. By Lemma~\ref{l:two notions of admissibility}, it is in $\Corr_{\adm}(\sR'_1,\sR'_2)$.

\section{Anafunctors}     \label{s:anafunctors}
We keep the notation of \S\ref{sss:Cats' notation}-\ref{sss:2graph of functor}; in particular, $\Cats'$ stands for the most naive $1$-ca\-te\-gory of categories (its morphisms are functors on the nose). In this section we continue the discussion of correspondences in $\Cats'$, which began in \S\ref{ss:admissibility for categories}; we will see that the picture is quite parallel to that of \S\ref{ss: Corr in DGRings}-\ref{ss:Admissibilization}. We mostly follow M.~Makkai \cite{Mak}.

\subsection{Four classes of correspondences in $\Cats'$}

Given $\cC_1,\cC_2\in\Cats'$, we defined in \S\ref{sss:Admissible correspondences} a strictly full subcategory $\Corr_{\adm}(\cC_1,\cC_2)\subset\Corr(\cC_1,\cC_2)$. Now we are going to define strictly full subcategories
$$\Corr_{\wadm}(\cC_1,\cC_2)\subset\Corr_{\ana}(\cC_1,\cC_2)\subset\Corr_{\eq}(\cC_1,\cC_2)\subset\Corr(\cC_1,\cC_2) $$
containing $\Corr_{\adm}(\cC_1,\cC_2)$.

\subsubsection{$\Corr_{\eq}(\cC_1,\cC_2)$ and $\Corr_{\ana}(\cC_1,\cC_2)$}   \label{sss:Anafunctors}
Define strictly full subcategories
\[
\Corr_{\ana} (\cC_1,\cC_2)\subset\Corr_{\eq} (\cC_1,\cC_2)\subset\Corr (\cC_1,\cC_2)
\]
as follows:  a correspondence 
\begin{equation}   \label{e:correspondence in Cats'}
\cC_1\overset{F}\longleftarrow \cC_{12}\overset{G}\longrightarrow \cC_2
\end{equation}
 is in $\Corr_{\eq}(\cC_1,\cC_2)$ (resp.~in $\Corr_{\ana}(\cC_1,\cC_2)$) if and only if 
$F: \cC_{12}\to\cC_1$ is an equivalence (resp.~a strictly surjective equivalence). 

Objects of $\Corr_{\ana} (\cC_1,\cC_2)$ are called \emph{anafunctors}; this terminology goes back to  M.~Makkai's work \cite{Mak}. Warning: Makkai's notion of \emph{morphism} of anafunctors is \emph{different} from ours (his category of anafunctors is equivalent to $\Funct (\cC_1,\cC_2)$).

\subsubsection{The diagram $\Corr_{\eq} (\cC_1,\cC_2)\rightleftarrows\Funct (\cC_1,\cC_2)$}  \label{sss:the adjoint pair}
(i) In \S\ref{sss:2graph of functor} we defined a fully faithful functor
\begin{equation}  \label{e:Graph}
\Graph :\Funct (\cC_1,\cC_2)\to\Corr_{\ana} (\cC_1,\cC_2)\subset\Corr_{\eq} (\cC_1,\cC_2). 
\end{equation}

(ii) On the other hand, we have the following functor $\Corr_{\eq} (\cC_1,\cC_2)\to\Funct (\cC_1,\cC_2)$:
 to a diagram \eqref{e:correspondence in Cats'} such that $F$ is an equivalence we associate $G\circ F^{-1}\in\Funct (\cC_1,\cC_2)$. Note that $F^{-1}$ and $G\circ F^{-1}$ are defined up to unique isomorphism.

\begin{lem}    \label{l:adjunction}
The functor $\Corr_{\eq} (\cC_1,\cC_2)\to\Funct (\cC_1,\cC_2)$ from \S\ref{sss:the adjoint pair}(ii) is left adjoint to the functor \eqref{e:Graph}.
 The unit of the adjunction is given by the functor 
\begin{equation}   \label{e:2admissibilization}
\cC_{12}\to\Graph_{GF^{-1}}
\end{equation}
 that takes $c\in\cC_{12}$ to $(c_1,c_2,\psi )$, where $c_1=F(c)$, $c_2=G(c)$, and 
$\psi :GF^{-1}F(c)\iso G(c)$ comes from the isomorphism $F^{-1}F(c)\iso c$. \qed
\end{lem}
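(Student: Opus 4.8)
The strategy is to prove directly that the functor \eqref{e:2admissibilization} is a \emph{universal arrow} from the object $X=(\cC_1\overset{F}\longleftarrow\cC_{12}\overset{G}\longrightarrow\cC_2)$ of $\Corr_{\eq}(\cC_1,\cC_2)$ to the functor $\Graph$; by the standard characterization of adjunctions via universal arrows this is equivalent to the assertion that $\Graph$ admits a left adjoint carrying $X$ to $GF^{-1}$, with \eqref{e:2admissibilization} as the unit. Throughout I fix once and for all a quasi-inverse $F^{-1}$ of $F$ together with natural isomorphisms $\eta:\id_{\cC_{12}}\iso F^{-1}F$ and $\varepsilon:FF^{-1}\iso\id_{\cC_1}$ forming an adjoint equivalence; this is exactly the choice already used in \S\ref{sss:the adjoint pair}(ii) to define $GF^{-1}$.

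The first step is a bookkeeping identification. By \S\ref{ss:correspondences}, a morphism in $\Corr_{\eq}(\cC_1,\cC_2)$ from $X$ to $\Graph(\Phi)=(\cC_1\longleftarrow\Graph_\Phi\longrightarrow\cC_2)$ is a functor $H:\cC_{12}\to\Graph_\Phi$ strictly compatible with the two legs, hence necessarily of the form $c\mapsto(F(c),G(c),\psi_c)$ with $\psi_c:\Phi(F(c))\iso G(c)$; unwinding the description of morphisms in $\Graph_\Phi$ from \S\ref{sss:2graph of functor}, functoriality of $H$ is precisely naturality of the family $(\psi_c)_c$. Thus $\Hom_{\Corr_{\eq}}(X,\Graph\Phi)$ is canonically the set of natural isomorphisms $\Phi\circ F\iso G$ of functors $\cC_{12}\to\cC_2$, and under this identification the morphism \eqref{e:2admissibilization} corresponds to the natural isomorphism $G\eta^{-1}:GF^{-1}\circ F\iso G$ (note that $\eta_c^{-1}$ is the isomorphism $F^{-1}F(c)\iso c$ appearing in the statement).

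The second step verifies the universal property. Given a morphism $h:X\to\Graph(\Phi)$, i.e.\ a natural isomorphism $\psi:\Phi F\iso G$, I set $\beta:=(\Phi\varepsilon)\circ(\psi^{-1}F^{-1}):GF^{-1}\iso\Phi$ and check that $\Graph(\beta)$ composed with \eqref{e:2admissibilization} equals $h$; this is a short diagram chase whose only nonformal ingredient is the triangle identity $(\varepsilon F)\circ(F\eta)=\id_F$ for the adjoint equivalence, applied at the morphisms $\eta_c^{-1}$. Uniqueness of $\beta$ follows since \eqref{e:2admissibilization} is itself an equivalence of categories (by two-out-of-three, as both its composite with $\pi_1:\Graph_{GF^{-1}}\to\cC_1$, namely $F$, and $\pi_1$ itself are equivalences) and $\Graph$ is faithful by Lemma~\ref{l:Graph is fully faithful}. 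Specializing $\Phi=GF^{-1}$ and $h=$\eqref{e:2admissibilization} returns $\beta=\id_{GF^{-1}}$, confirming that \eqref{e:2admissibilization} is indeed the unit; naturality of the resulting bijection $\Hom_{\Funct}(GF^{-1},\Phi)\cong\Hom_{\Corr_{\eq}}(X,\Graph\Phi)$ in both variables is then automatic from the universal-arrow formulation (and can also be seen by hand from the fact that whiskering respects composition of span morphisms).

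The main obstacle is the first step: one must correctly match a strict morphism of spans into $\Graph_\Phi$ with a natural isomorphism $\Phi F\iso G$, and keep the directions of all the structure isomorphisms straight — the $\psi_c$, the isomorphism $F^{-1}F(c)\iso c$, and the whiskered copies of $\eta$ and $\varepsilon$ — so that the triangle identity is applied with the correct orientation in the second step. Once the dictionary of the first step is in place, the adjunction and the identification of the unit are purely formal.
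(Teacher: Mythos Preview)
Your argument is correct. The paper gives no proof at all (the lemma is marked \qed), so your verification via universal arrows is simply the omitted routine check; the identification in your first step of $\Hom_{\Corr_{\eq}}(X,\Graph\Phi)$ with natural isomorphisms $\Phi F\iso G$ is the key observation, and once it is in place the existence and the triangle-identity computation go through as you indicate. The only point that could use one more sentence is the uniqueness step: an equivalence $u$ need not be cancellable on the nose in the strict $1$-category $\Corr_{\eq}$, but here it is, because any strict span morphism $H:\Graph_{GF^{-1}}\to\Graph_\Phi$ is determined on objects by functoriality from its values on the essential image of $u$ (the $\cC_1\times\cC_2$-components are fixed by strictness, and the remaining isomorphism component is forced by the morphism condition in $\Graph_\Phi$). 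With that remark your proof is complete.
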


\subsubsection{Admissible and weakly admissible correspondences}
In \S\ref{e:Graph} we defined the category $\Corr_{\adm} (\cC_1,\cC_2)$ of admissible correspondences to be the essential image of the functor \eqref{e:Graph}.
By Lemma~\ref{l:adjunction}, a correspondence \eqref{e:correspondence in Cats'} is admissible if and only if the functor \eqref{e:2admissibilization} is an isomorphism.

We say that a correspondence \eqref{e:correspondence in Cats'} is \emph{weakly admissible} if the functor \eqref{e:2admissibilization} is strictly surjective. Note that in our situation \emph{essential} surjectivity of \eqref{e:2admissibilization} is automatic; in fact, the functor \eqref{e:2admissibilization} is automatically an equivalence.

Let $\Corr_{\wadm} (\cC_1,\cC_2)\subset\Corr_{\eq} (\cC_1,\cC_2)$ be the full subcategory of weakly admissible
correspondences. 
One has
\[
\Corr_{\wadm} (\cC_1,\cC_2)\subset\Corr_{\ana} (\cC_1,\cC_2)
\]
because strict surjectivity of \eqref{e:2admissibilization} implies strict surjectivity of $F:\cC_{12}\to\cC_1$.

In Lemmas~\ref{l:2admissibility criterion} we gave two criteria for admissibility. 
This lemma remains valid if one replaces ``admissible'' by ``weakly admissible'' , replaces ``bijective'' by ``surjective'' in (b) and removes the words ``only one'' from (c).

\subsubsection{Admissibilization}
The functor $\Corr_{\eq} (\cC_1,\cC_2)\to\Corr_{\adm} (\cC_1,\cC_2)$ obtained by composing the two functors from \S\ref{sss:the adjoint pair}
is called \emph{admissibilzation}\footnote{The name used by Makkai \cite{Mak} is \emph{saturation}. His name for ``admissible correspondence'' is ``saturated anafunctor''.}.

If a correspondence \eqref{e:correspondence in Cats'} is weakly admissible then its admissibilization
is the correspondence $\cC_1\leftarrow \tilde\cC_{12}\to\cC_2$ obtained by setting $\Ob\tilde\cC_{12}:=(\Ob\cC_{12}/)R$, where $R$ is the following equivalence relation: $c\sim c'$ if $F(c)=F(c')$, $G(c)=G(c')$, and the unique isomorphism $\alpha :c\iso c'$ with $F(\alpha )=\id$ satisfies $G(\alpha )=\id$. (We do not have to worry about the morphisms of 
$\tilde\cC_{12}$ and their composition: they come from $\cC_1$).

\subsection{Composition of functors and correspondences}
As before, we follow \cite{Mak}.

\subsubsection{Composition of correspondences}
According to the general definition from \S\ref{ss:correspondences}(iii), correspondences in $\Cats'$ are composed as follows: the composition of correspondences 
$$\cC_1\leftarrow \cC_{12}\to \cC_2 \mbox{ and } \cC_2\leftarrow \cC_{23}\to \cC_3$$ is  the correspondence $\cC_1\leftarrow \cC_{12}\times_{\cC_2} \cC_{23}\to \cC_3$, where 
$\cC_{12}\times_{\cC_2} \cC_{23}$ is the fiber product in $\Cats'$ (rather than in $\Cats$). In general, the fiber product in $\Cats'$ is ``not really good''. But it is good enough if the correspondences in question are anafunctors. 

One checks that the composition of anafunctors is an anafunctor, and the composition of weakly admissible correspondences is weakly admissible.
Moreover, the construction of 
\S\ref{sss:the adjoint pair}(ii) takes composition of anafunctors to composition of functors.

Thus we get the 2-category whose objects are categories, whose 1-morphisms are anafunctors, and whose 2-morphisms are 2-morphisms between correspondences; we also get a functor from this 2-category to $\Cats$.

Let us note that the composition of admissible correspondences is usually \emph{not admissible}. In other words, given functors 
$\cC_1\overset{\Phi}\longrightarrow\cC_2\overset{\Psi}\longrightarrow\cC_3$, we have a canonical surjective equivalence $\Graph_\Phi\times_{\cC_2}\Graph_\Psi\to\Graph_{\Psi\circ\Phi}$, but it is \emph{not an isomorphism}. Indeed, an object of $\Graph_{\Psi\circ\Phi}$ is given by objects $c_1\in\cC_1, c_3\in\cC_3$, and an isomorphism $\Psi (\Phi (c_1))\iso c_3$; on the other hand, to specify an object of $\Graph_\Phi\times_{\cC_2}\Graph_\Psi$, one needs, in addition, an object $c_2\in\cC_2$ and an isomorphism $\Phi (c_1)\iso c_2$.

\subsubsection{$\Cats_\AN$ as a ``model'' for $\Cats$}
Thus we see that the 2-category $\Cats$ is equivalent to  the $2$-category $\Cats_\AN$ defined as follows:

(a) the objects of $\Cats_\AN$ are categories;

(b) the category of 1-morphisms from an object $\cC_1$ to and object $\cC_2$ is $\Corr_\adm (\cC_1,\cC_2)$;

(c) the composition of 1-morphisms is the admissibilization of their composition as correspondences in $\Cats'$.

The above definition of  $\Cats_\AN$ is parallel to the definition of $\RGrpds_\AN$ from \S\ref{sss:RGrpds_AN}.

\bibliographystyle{alpha}

\end{document}